\newtheorem{theorem}{Theorem}[section]
\newtheorem{proposition}[theorem]{Proposition}
\newtheorem{lemma}[theorem]{Lemma}
\newtheorem{corollary}[theorem]{Corollary}
\newtheorem{definition}[theorem]{Definition}
\newtheorem{remark}[theorem]{Remark}
\newcommand{\cE}{{\mathcal E} }
\newcommand{\cF}{{\mathcal F} }
\newcommand{\cG}{{\mathcal G} }
\newcommand{\cO}{{\mathcal O} }
\newcommand{\wt}{\widetilde}
\def\ol#1{{\overline{#1}}}
\def\ka{{K{\"a}h\-ler}}
\def\ii{\sqrt{-1}}
\def\C{\mathbb{C}}
\def\z{\mathfrak z}
\def\cinf{C^\infty}
\def\rk{{\mathrm{rk}}}
\def\tr{{\mathrm{tr}}}
\def\db{{\ol\partial}}
\numberwithin{equation}{section}
\begin{document}

\title{Yang--Mills equation for stable Higgs sheaves}

\author[I. Biswas]{Indranil Biswas}

\address{School of Mathematics, Tata Institute of Fundamental
Research, Homi Bhabha Road, Mumbai 400005, India}

\email{indranil@math.tifr.res.in}

\author[G. Schumacher]{Georg Schumacher}

\address{Fachbereich Mathematik und Informatik,
Philipps-Universit\"at Marburg, Lahnberge, Hans-Meerwein-Strasse,
D-35032 Marburg, Germany}

\email{schumac@mathematik.uni-marburg.de}

\subjclass[2000]{53C07, 32L05}

\keywords{Higgs sheaf, Kobayashi-Hitchin correspondence,
polystability}

\date{}

\begin{abstract}
We establish
a Kobayashi-Hitchin correspondence for the stable Higgs sheaves
on a compact K\"ahler manifold. Using it, we also obtain
a Kobayashi-Hitchin correspondence for the stable
Higgs $G$--sheaves,
where $G$ is any complex reductive linear algebraic group.
\end{abstract}
\maketitle

\section{Introduction}

The concept of Hermite--Einstein equations for stable sheaves was
introduced by Bando and Siu in \cite{BS}. It depends upon the notion
of a certain class of hermitian metrics on reflexive sheaves,
called \textit{admissible}, for which the curvature is square
integrable satisfying a pointwise
boundedness condition. It also
depends upon the extension of a solution of the corresponding
Yang--Mills (= Hermite--Einstein)
equation to the open subset where the sheaf is locally
free. The approach is based on solving a heat equation.

The notion of a \textit{Higgs bundle} is due to Hitchin and
Simpson. They generalized the definition of stability to Higgs
bundles, and also generalized the Yang--Mills equation to the
Higgs bundles. In \cite{Hit,Si}
they established for Higgs bundles what is called the
Kobayashi--Hitchin correspondence.

Our aim here is to combine both concepts to get a
Kobayashi--Hitchin correspondence for Higgs sheaves. This is
worked out in Theorem \ref{yang-mills-higgs}.

Also, stable principal Higgs $G$--sheaves are introduced,
where $G$ is any complex reductive linear algebraic group. It
follows from our main theorem that tensor products of polystable
Higgs sheaves are again polystable. Once shown this fact, a
Kobayashi--Hitchin correspondence for
stable Higgs $G$--sheaves follows.


\section{Higgs sheaves and admissible metrics}

Let $X$ be a compact connected K\"ahler manifold equipped with a
K\"ahler form $\omega$. The adjoint of multiplication of differential
forms by $\omega$ will be denoted by $\Lambda_\omega$.
We will use the summation convention throughout.

\begin{definition}
  {\rm A \textit{Higgs sheaf} on $X$ consists of a
  torsionfree sheaf $\cE$ on $X$ together with a holomorphic section
  $\varphi=\varphi_\alpha dz^\alpha$ of
  $\Omega^1_X(\textit{End}(\cE))$ such that
  the form $\varphi\wedge\varphi=[\varphi_a,\varphi_\gamma]dz^\alpha
  \wedge dz^\gamma$, which is a
  holomorphic section of $\Omega_X^2(\textit{End}( \cE))$,
  vanishes identically.}
\end{definition}

Let $\cE$ be a torsionfree sheaf on $(X,\omega)$.
Let $S \subset X$ be the locus where $\cE$ is not locally free. So
$S$ is a complex analytic subset with $\mathrm{codim}_XS\geq 2$.
Following \cite{BS} we call a hermitian metric $h$ on
the holomorphic vector bundle $\cE|X\backslash S$ to
be \textit{admissible} if the following two hold:
\begin{itemize}
\item[$\mathbf{A_1}$:] \quad the curvature tensor
$F$ of $h$ is square integrable, and
\item[$\mathbf{A_2}$:] \quad
$\Lambda_\omega F$ is bounded.
\end{itemize}

For a torsionfree sheaf $\cE$, there is a natural embedding
of $\cE$ in its double dual $\cE^{\vee\vee}$. It is easy to
see that any 
admissible hermitian metric on $\cE^{\vee\vee}$ restricts
to an admissible hermitian metric on $\cE$.

Let $n$ be the complex dimension of $X$. The degree of
a torsionfree sheaf $\cE$, which is same as the degree of the
determinant line bundle $\det \cE$ \cite[Ch. V, \S 6]{Ko}, is
defined in terms of cohomology classes:
$$
\deg \cE = (c_1(\cE) \cup [\omega]^{n-1})\cap[X]\in {\mathbb R}\, ,
$$
where $[\omega]\in H^2(X, {\mathbb R})$ is the cohomology
class represented by $\omega$. Here, and below, we denote by
$\omega^k$ the $k$--the exterior power of $\omega$ divided by $k!$.

Let $(\cE,\varphi)$ be a Higgs sheaf. A coherent subsheaf
$\cF\subset \cE$ is called a \textit{Higgs subsheaf}, if
$\varphi(\cF)\subset \Omega^1_X\otimes\cF$.

\begin{definition}\label{de.st.sh.}
  {\rm A Higgs sheaf $(\cE,\varphi)$ over $X$
is called \textit{stable} (respectively, \textit{semistable}), if
for any
  Higgs subsheaf $\cF$, with $0<\rk \cF < \rk \cE$, the inequality
$$
\frac{\deg \cF}{\rk \cF}< \frac{\deg \cE}{\rk \cE}
$$
(respectively, $\frac{\deg \cF}{\rk \cF}\leq
\frac{\deg \cE}{\rk \cE}$)
holds. A Higgs sheaf $(\cE,\varphi)$
is called \textit{polystable}, if it decomposes into a
direct sum of stable Higgs subsheaves
$(\cE,\varphi) = \oplus_i (\cF_i, \varphi_i)$ with
$$
\frac{\deg \cF_i}{\rk \cF_i}= \frac{\deg \cE}{\rk \cE}
$$
for all $i$.}
\end{definition}

\begin{lemma}
The degree of a torsionfree sheaf $\cE$ on $X$ can be
evaluated from the curvature of an admissible hermitian
metric $h$ on $\cE$. Namely,
\begin{equation}
\deg \cE = \int_X c_1(\cE, h)\wedge \omega^{n-1}\, .
\end{equation}
\end{lemma}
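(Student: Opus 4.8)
The plan is to reduce the assertion to the case of a line bundle and then to compare the given admissible metric with a smooth reference metric on all of $X$.

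First I would pass to the determinant. Since $\cE$ is torsionfree, $L := \det\cE = (\Lambda^{\rk\cE}\cE)^{\vee\vee}$ is a genuine line bundle on the smooth manifold $X$, and by the definition of degree recalled above one has $\deg\cE = \deg L$. On $X\setminus S$ the sheaf $\cE$ is locally free, so $\det h$ is a hermitian metric on $L|_{X\setminus S}$ and the first Chern form satisfies $c_1(\cE,h) = \frac{\ii}{2\pi}\tr F = c_1(L,\det h)$ there. Admissibility passes to $\det h$: by $\mathbf{A_1}$ the form $\tr F$, hence $c_1(L,\det h)$, is square integrable on $X\setminus S$, and by $\mathbf{A_2}$ the function $\Lambda_\omega \tr F$ is bounded. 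It therefore suffices to prove the formula for $(L,\det h)$.

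Next I would fix a smooth hermitian metric $h_0$ on $L$ over all of $X$ and let $\psi = \log(h_0/\det h)$ be the resulting ratio function, smooth on $X\setminus S$. Because both metrics live on the same line bundle, the two Chern forms differ by a $\ddb$-exact term on $X\setminus S$:
$$
c_1(L,\det h) - c_1(L,h_0) \;=\; \frac{1}{2\pi}\,\ddb\,\psi .
$$
For the smooth metric $h_0$ the Chern--Weil theorem identifies $c_1(L,h_0)$ with a de Rham representative of $c_1(L)\in H^2(X,\R)$, so that $\int_X c_1(L,h_0)\wedge\omega^{n-1} = \deg L = \deg\cE$ by the topological definition of degree. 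Consequently the lemma is equivalent to the single vanishing statement
$$
\int_{X\setminus S} \ddb\,\psi \wedge \omega^{n-1} \;=\; 0 .
$$

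To obtain this I would integrate by parts. Using $d\omega=0$ one has $\ddb\psi\wedge\omega^{n-1} = d\big(\ii\,\db\psi\wedge\omega^{n-1}\big)$ on $X\setminus S$, so the integral is governed entirely by the behaviour of $\psi$ near $S$. I would introduce cutoff functions $\chi_\varepsilon$ equal to $1$ outside an $\varepsilon$-tube around $S$ and vanishing near $S$, with $|d\chi_\varepsilon|$ controlled, insert $\chi_\varepsilon$ under the integral, and transfer the derivatives onto $\chi_\varepsilon$ by Stokes. The resulting error terms are supported on the thin annular region where $d\chi_\varepsilon\neq 0$; there the square integrability of $\ddb\psi$ coming from $\mathbf{A_1}$, the bound on $\Lambda_\omega\ddb\psi$ coming from $\mathbf{A_2}$, and the smallness of the volume of that region forced by $\mathrm{codim}_X S\geq 2$ should drive them to $0$ as $\varepsilon\to 0$.

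The main obstacle is precisely this last step: controlling the growth of the potential $\psi$ as one approaches the singular set and proving that the cutoff error terms vanish in the limit. Equivalently, one must verify that the closed square-integrable form $c_1(L,\det h)$ extends across $S$ as a closed current whose cohomology class is $c_1(L)$; this is the analytic core of the Bando--Siu theory, where the codimension hypothesis on $S$ together with the admissibility conditions $\mathbf{A_1}$ and $\mathbf{A_2}$ enter in an essential way.
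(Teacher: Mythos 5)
Your reduction --- passing to the determinant line bundle, comparing with a smooth background metric, and reducing the lemma to the vanishing of $\int_X \ddb\psi\wedge\omega^{n-1}$ --- is exactly the paper's first step (there the potential is called $\chi$ and the target identity is written as $\int_X\Box\chi\cdot\omega^n=0$). But the proposal stops precisely where the real work begins. The cutoff-and-Stokes argument you sketch produces error terms such as $\int d\chi_\varepsilon\wedge\ii\,\db\psi\wedge\omega^{n-1}$, supported on a tube of volume $O(\varepsilon^4)$ on which $|d\chi_\varepsilon|\sim\varepsilon^{-1}$; to kill this you need something like $|\db\psi|=o(\varepsilon^{-3})$ there, or at least $\db\psi\in L^2$ near $S$. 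Neither $\mathbf{A_1}$ nor $\mathbf{A_2}$ gives you any such control: the admissibility conditions constrain the curvature, i.e.\ $\ddb\psi$ up to a smooth form, and say nothing about $\psi$ or its first derivatives near $S$. You correctly identify this as ``the main obstacle'' and as the analytic core of the matter, but naming the obstacle is not the same as removing it, so as written the proof is incomplete.

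The paper closes this gap without any boundary estimates, by a potential-theoretic argument you may want to compare with. Since $f:=\Box\chi$ is bounded by $\mathbf{A_2}$, hence in $L_2$, one applies the global Green operator to $f-\alpha$ (with $\alpha$ the mean value of $f$) to obtain $\rho\in\mathbf{H}^2$ solving $\Box\rho=f-\alpha$, solves $\Box\mu_{U_i}=\alpha$ on the members of an open cover, and observes that $\chi-\rho-\mu_{U_i}$ is harmonic on $U_i\setminus S$ and extends harmonically across $S\cap U_i$ because $S$ has complex codimension at least two. This puts $\chi$ itself in $\mathbf{H}^2$ globally, after which $\int_X\Box\chi\,\omega^n=0$ is immediate. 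In other words, the missing growth control on the potential near $S$ is exactly what the removable-singularity step supplies; if you insist on a cutoff argument you would have to prove such a statement first, at which point the cutoff is no longer needed.
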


\begin{proof}
Let $\eta$ denote the (singular) hermitian metric on the
determinant line bundle $\det\cE$ which is
induced by the given admissible hermitian metric $h$ on $\cE$. Now
let $\eta_0$ be a background metric on $\det\cE$ of class $\cinf$,
and set $\eta=e^\chi\cdot \eta_0$, where $\chi$ is a real valued
function on the open subset $X\backslash S$ where $\cE$ is locally
free. Since the hermitian metric on $\cE$ is
admissible, we know that $\chi$ is of class $\cinf$ on $X\backslash
S$, and $\Box \chi$ is bounded. (Note that the condition that
$\Lambda_\omega F$ is bounded implies that $\Box \chi$ is bounded.)

The degree of $\cE$ with respect to $\omega$ can be
computed from $\eta_0$.
Therefore, to prove the lemma it suffices to show that
\begin{equation}
\label{inlapl0} \int_X \Box \chi \cdot \omega^n =0\, ,
\end{equation}
where $\chi$ is of class $\cinf$ on $X\backslash S$ with $\Box\chi$
bounded. Since $f:=\Box \chi$ is in $L_2$, we can apply the
global Green's operator to the function $f_0= f -\alpha$, with
$\alpha:=(\int_Xf \omega^n/\int_X\omega^n)$, and obtain a solution
$\rho \in \mathbf{H}^2$ of $\Box \rho = f_0$ (cf.\ \cite[Section
7]{kodaira}). By elliptic regularity, the function $\rho|X\backslash
S$ on $X\backslash S$ is of class $\cinf$. Let $\{U_i\}$ be an open
covering of $X$ together with a set of $\cinf$ functions $\mu_{U_i}$
on $U_i$ satisfying $\Box \mu_{U_i}=\alpha$ on $U_i$. Then
$(\chi-\rho-\mu_{U_i})|U_i\backslash S$ is harmonic.
We note that the function $(\chi-\rho-\mu_{U_i})|U_i\backslash S$
can be
extended as a harmonic function across the set $S\cap U_i$. Indeed,
this follows from the fact that the complex codimension of
the analytic subset $S\cap U_i\subset
U_i$ is at least two. In particular,
$\chi \in \mathbf{H}^2$, and \eqref{inlapl0} follows.
\end{proof}

Let $(\cE,\varphi)$ be a Higgs sheaf. Then $\varphi$ extends
to a Higgs field on the double dual $\cE^{\vee\vee}$. This
Higgs field on $\cE^{\vee\vee}$ will be denoted by
$\varphi^{\vee\vee}$.

\begin{lemma}\label{tors}
A torsionfree Higgs sheaf $(\cE,\varphi)$ on $(X,\omega)$ is
stable (respectively, semistable),
if and only if $(\cE^{\vee\vee},\varphi^{\vee\vee})$ is
stable (respectively, semistable). Similarly, $(\cE,\varphi)$
is polystable if and only if $(\cE^{\vee\vee},\varphi^{\vee\vee})$
is so.
\end{lemma}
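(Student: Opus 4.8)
The plan is to reduce every assertion to a comparison of slopes of Higgs subsheaves on the two sides, exploiting that $\cE$ and $\cE^{\vee\vee}$ agree on $X\setminus S$ with $\mathrm{codim}_X S\ge 2$. First I would record the numerical invariants that are insensitive to such a codimension-two modification: the ranks coincide, $\rk\cE=\rk\cE^{\vee\vee}$, and, since the first Chern class of a sheaf supported in codimension $\ge 2$ vanishes, the degrees coincide, $\deg\cE=\deg\cE^{\vee\vee}$, so $\mu(\cE)=\mu(\cE^{\vee\vee})$. More generally, two torsionfree sheaves of the same rank that agree off a codimension-two set have equal degree, and passing to the saturation of a subsheaf never decreases its degree while preserving its rank. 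These are the only numerical inputs the argument needs.

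For the (semi)stability equivalence I would build a slope-preserving and $\varphi$-invariance-preserving correspondence between test subsheaves. Suppose $(\cE^{\vee\vee},\varphi^{\vee\vee})$ is (semi)stable and let $\cF\subset\cE$ be a Higgs subsheaf with $0<\rk\cF<\rk\cE$. Viewing $\cF\subset\cE^{\vee\vee}$, let $\wh\cF$ be its saturation there. Since $\cE^{\vee\vee}/\wh\cF$ is torsionfree and $\Omega^1_X$ is locally free, the composition $\wh\cF\hookrightarrow\cE^{\vee\vee}\to\Omega^1_X\otimes(\cE^{\vee\vee}/\wh\cF)$, induced by $\varphi^{\vee\vee}$ followed by projection, vanishes on the dense open set where $\wh\cF=\cF$; its image is thus a torsion subsheaf of a torsionfree sheaf, hence zero, so $\wh\cF$ is $\varphi^{\vee\vee}$-invariant. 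As $\rk\wh\cF=\rk\cF$ and $\deg\wh\cF\ge\deg\cF$, (semi)stability of $\cE^{\vee\vee}$ forces $\mu(\cF)\le\mu(\wh\cF)\le\mu(\cE^{\vee\vee})=\mu(\cE)$, strictly in the stable case. Conversely, if $(\cE,\varphi)$ is (semi)stable and $\cG\subset\cE^{\vee\vee}$ is a Higgs subsheaf, I would intersect: $\cG\cap\cE\subset\cE$ is $\varphi$-invariant because $\varphi(\cG\cap\cE)\subset(\Omega^1_X\otimes\cG)\cap(\Omega^1_X\otimes\cE)=\Omega^1_X\otimes(\cG\cap\cE)$, it has the same rank as $\cG$, and $\cG/(\cG\cap\cE)$ is supported on $S$, so $\deg(\cG\cap\cE)=\deg\cG$. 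Hence $\mu(\cG)=\mu(\cG\cap\cE)\le\mu(\cE)=\mu(\cE^{\vee\vee})$, again strictly in the stable case.

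For polystability, the forward implication is immediate: if $(\cE,\varphi)=\oplus_i(\cF_i,\varphi_i)$ with each $\cF_i$ stable of slope $\mu(\cE)$, then $(-)^{\vee\vee}$ commutes with finite direct sums, so $(\cE^{\vee\vee},\varphi^{\vee\vee})=\oplus_i(\cF_i^{\vee\vee},\varphi_i^{\vee\vee})$; by the stable case each summand is stable, and $\mu(\cF_i^{\vee\vee})=\mu(\cF_i)=\mu(\cE^{\vee\vee})$, so the double dual is polystable.

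The converse implication for polystability is the step I expect to be the main obstacle. Writing $\cE^{\vee\vee}=\oplus_i\cG_i$ with $\cG_i$ stable of equal slope, the natural candidate decomposition of $\cE$ is $\cF_i:=\cE\cap\cG_i$: these are Higgs subsheaves by the intersection argument above, they satisfy $\cF_i^{\vee\vee}=\cG_i$, and $\oplus_i\cF_i$ is a direct sum inside $\cE$ agreeing with $\cE$ off $S$. The difficulty is to upgrade this to an \emph{equality} $\oplus_i\cF_i=\cE$: a priori the inclusion $\oplus_i\cF_i\hookrightarrow\cE$ is only an isomorphism in codimension one, and the projections $\cE\to\cG_i$ need not land in $\cE$, so the codimension-two comparison that settled the numerical statements does not by itself produce a splitting. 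I would try to close this by analysing the local structure of $\cE$ along $S$, asking whether the idempotents of $\cE^{\vee\vee}$ restrict to idempotents of $\cE$. I expect this to be genuinely delicate — the obstruction is exactly that a nonreflexive $\cE$ can be indecomposable while $\cE^{\vee\vee}$ splits — so this is the point deserving the most scrutiny, and it may well require restricting the converse to the reflexive setting or supplying extra input such as the admissible Hermite--Einstein metric that polystability of $\cE^{\vee\vee}$ provides.
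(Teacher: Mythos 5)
Your handling of the stable and semistable cases is correct and is essentially the paper's own argument: transport test objects through the inclusion $\cE\hookrightarrow\cE^{\vee\vee}$ in one direction and intersect with $\cE$ in the other, noting that ranks and degrees are unchanged because the discrepancy lives in codimension two. The paper is in fact a bit more economical in the first direction: a Higgs subsheaf $\cF\subset\cE$ is already a Higgs subsheaf of $\cE^{\vee\vee}$ of the same rank and degree, so one can test with $\cF$ itself and the saturation step is unnecessary (though your verification that the saturation stays $\varphi^{\vee\vee}$--invariant is correct and harmless). Your forward polystability implication, via compatibility of double duals with finite direct sums plus the stable case, is also right, and it is the only polystability implication the paper actually uses later (in Lemma \ref{lem1.G} and in the reduction of Theorem \ref{yang-mills-higgs} to reflexive sheaves).

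Your suspicion about the remaining implication is justified, and the situation is worse than a gap you could hope to close: with Definition \ref{de.st.sh.} taken literally, ``$(\cE^{\vee\vee},\varphi^{\vee\vee})$ polystable $\Rightarrow$ $(\cE,\varphi)$ polystable'' is false, so no local analysis of idempotents along $S$ will rescue it; the paper's ``a similar proof works in the \dots polystable cases'' glosses over exactly this point. Concretely, take $X$ a compact K\"ahler surface carrying line bundles $L_1\not\cong L_2$ of equal degree with $\Hom(L_1,L_2)=\Hom(L_2,L_1)=0$ (for instance $\cO_X$ and a nontrivial element of $\mathrm{Pic}^0$ on an abelian surface), set $\varphi=0$, and let $\cE=\ker\bigl(L_1\oplus L_2\to\cO_p\bigr)$ where the map is nonzero on each summand. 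Then $\cE^{\vee\vee}=L_1\oplus L_2$ is polystable; $\cE$ is not stable, since $\cE\cap(L_1\oplus 0)$ has the same slope as $\cE$; and $\cE$ is indecomposable, because any splitting $\cE=A\oplus B$ would force, by Krull--Schmidt applied to $A^{\vee\vee}\oplus B^{\vee\vee}=L_1\oplus L_2$, the vanishing of $\Hom(L_i,L_j)$ for $i\neq j$, and the fact that $\cE^{\vee\vee}/\cE$ has length one, one of the two summands to be all of $L_1\oplus 0$ or $0\oplus L_2$, contradicting the nonvanishing of the map on each summand. Hence $\cE$ is not polystable while $\cE^{\vee\vee}$ is. The statement that is both true and detectable by the metric side (an admissible metric only sees $\cE|X\setminus S=\cE^{\vee\vee}|X\setminus S$) is obtained either by restricting the polystability clause to reflexive sheaves or by \emph{defining} polystability of a torsionfree Higgs sheaf through its double dual; the same caveat applies to the ``only if'' direction of Corollary \ref{YM-cor.}. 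So your decision to flag rather than force this step is the right one.
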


\begin{proof}
We consider the embedding $\cE\hookrightarrow\cE^{\vee\vee}$,
and note that the degrees of $\cE$ and $\cE^{\vee\vee}$ coincide.
If $\cF$ denotes a Higgs
subsheaf of $(\cE,\varphi)$, then its image in
$\cE^{\vee\vee}$ is a Higgs subsheaf of same
rank and degree as those of $\cF$. So the
stability of $(\cE^{\vee\vee},\varphi^{\vee\vee})$ implies the
stability of $(\cE,\varphi)$. Conversely, for any Higgs subsheaf
$\cG \subset \cE^{\vee\vee}$, the intersection $\cG\cap \cE$ is a
Higgs subsheaf of $\cE$ same rank and degree. This proves the
converse. A similar proof works in the semistable and
polystable cases.
\end{proof}

We will define a hermitian Yang--Mills--Higgs metric
on a Higgs sheaf.

\begin{definition}\label{de.h-y-m}
{\rm Let $(\cE,\varphi)$ be a Higgs sheaf on $(X,\omega)$. An
admissible hermitian metric $h$ on $\cE$ is called a \textit{hermitian
Yang--Mills--Higgs metric} if the hermitian
Yang--Mills--Higgs equation
\begin{equation}\label{h-y-m}
\Lambda_\omega (F+[\varphi,\varphi^*])
= \lambda \cdot \textrm{Id}_\cE
\end{equation}
on $X\backslash S$ is satisfied for some constant $\lambda
\in {\mathbb C}$; here $S$, as before, is the subset
of $X$ where $\cE$ fails to be locally free.}
\end{definition}

We make the following observation: Let $U\subset \C^n$ be an open
subset, and let $\cE_U \subset \cO^\ell_U$ be a coherent subsheaf.
Then the restriction of any hermitian metric on $\cO^\ell_U$ to
$\cE_U$ is an admissible metric over relatively compact subsets of
$U$.

Let $\cE \in Coh(X)$ be torsionfree. There exists an open covering
$\{U_\alpha\}$ of $X$ together with presentations
\begin{equation}\label{resol}
\cO^{k_\alpha}_{U_\alpha}\overset{\varphi^\vee_\alpha}{\longrightarrow}
\cO^{\ell_\alpha}_{U_\alpha} \longrightarrow \cE^\vee|U_\alpha
\longrightarrow 0\, .
\end{equation}
According to Hironaka's flattening theorem \cite{hiro_flat}, there
exists a finite sequence of blow--ups with smooth centers
$\pi_i:X_i \to X_{i-1}$, where $i=1,\ldots,\ell$ and
$X_0=X$, such that the
pull--back of $\cE^\vee$ to $X_\ell$ modulo torsion is
locally free. Set $\wt X :=X_\ell$, and let
$$
\pi:=\pi_1\circ\ldots\circ \pi_\ell: \wt X \to X
$$
be the projection. So $(\pi^*(\cE^\vee))/\text{torsion}$
is a holomorphic vector bundle over $\wt X$.

Set $\wt U_\alpha:=\pi^{-1}(U_\alpha)$.
We note that for a resolution of type
\eqref{resol}, the equality
$$
\textrm{coker}(\pi^*\varphi^\vee_\alpha)=(\pi^*(\cE^\vee)/
\text{torsion})|\wt U_\alpha
$$
holds independent of the choice of a local resolution
$\pi\vert\wt U_\alpha : \wt U_\alpha\rightarrow U_\alpha$
(cf.\ \cite[Section 5]{rey-gr} in particular \cite[5.4]{rey-gr}).

Assume that $\cE$ is reflexive, that is,
$\cE=\cE^{\vee\vee}$. We have exact sequences
$$
0 \longrightarrow \cE|\wt U_\alpha \longrightarrow\cO_{\wt
U_\alpha}^{ \ell_\alpha}\overset{\varphi_\alpha}{ \longrightarrow}
\cO_{\wt U_\alpha}^{k_\alpha}\, .
$$
Setting $\wt\cE:= (\pi^*(\cE^\vee)/\text{torsion})^\vee$
we get that
$$
\wt\cE|\wt U_\alpha=\ker(\pi^*\varphi_\alpha)=
(\pi^*\cE/\text{torsion})|\wt U_\alpha
$$
is actually a \textit{sub--vector bundle} of $\cO_{\wt
U_\alpha}^{\ell_\alpha}$, where
$\wt U_\alpha=\pi^{-1}(U_\alpha)$.
Observe that $\cE|U_\alpha \hookrightarrow
\cO^{\ell_\alpha}_{U_\alpha}$ is a subbundle wherever it is
locally free.

Let $h_\alpha$ be a hermitian metric on
$\cO^{\ell_\alpha}_{U_\alpha}$. Let $\wt h_\alpha$ denote the
restriction of the hermitian metric
$\pi^*h_\alpha$ to the subbundle
$\wt \cE|U_\alpha$. Now $\wt h_\alpha$
clearly is a hermitian metric (with no degeneracies) which
descends to a hermitian metric on $\cE|U_\alpha$ over $U_\alpha$
wherever $\cE$ is locally free. Taking a partition of unity
subordinate to $\{U_\alpha\}$, we get a hermitian metric $\wt h$ on
$\wt \cE$, which descends to a hermitian metric on $\cE$ over
$X\setminus S$ (the subset where $\cE$ is locally free). Let
$h$ denote the hermitian metric on $\cE|X\setminus S$ obtained
this way from $\wt h$.

The complex manifold $\wt X$ is \ka. We denote by
$\eta$ a \ka\ form on $\wt X$, and we set
$$
\omega_\epsilon:= \pi^*\omega + \epsilon\eta
$$
for $0 \leq \epsilon\leq 1$.
We will later need the following argument which shows that $h$ is
admissible with respect to $\omega$ on $X$:

Let $F$ be the curvature tensor of $h$ (over $X\backslash S$). The
pull--back of $F$
to $\wt X \setminus \pi^{-1}(S)$ extends to $\wt X$ as the
curvature tensor of the hermitian metric
$\wt h$ on the vector bundle $\wt \cE$. We use
the same notation $F$ for the pull--back.

Now
\begin{equation}\label{LambdaF}
\Lambda_\epsilon F \omega_\epsilon^n= F \wedge
\omega_\epsilon^{n-1}\, .
\end{equation}
The right--hand side is a bounded $\textit{End}(\wt\cE)$--valued form
on $\wt X$ with estimates uniform with respect to $\epsilon$. With
$\epsilon\to 0$, since $\omega_0=\pi^*\omega$, we see the boundedness
of $\Lambda_\omega F$
on $X\backslash S$. In a similar way we see that
$$
\tr (F\wedge F)\wedge \omega^{n-2}= \lim_{\epsilon\to 0} \tr(F\wedge
F)\wedge \omega_\epsilon^{n-2}
$$
is integrable on $X$. In view of \eqref{LambdaF}, it therefore
follows that $F$ is square integrable. Consequently, $h$ is
admissible with respect to $\omega$.

\begin{lemma}
Let $(\cE,\varphi)$ be a Higgs sheaf equipped with
an admissible hermitian metric $h$ on the
compact \ka\ manifold  $(X,\omega)$.
Then $\varphi$ is bounded on $X$. In particular,
$\varphi$ is square integrable.
\end{lemma}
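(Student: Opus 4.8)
The plan is to transport the Higgs field up to the modification $\wt X$ constructed above, where it becomes a holomorphic section of a genuine vector bundle over a compact manifold, deduce boundedness there for free, and then push the estimate back down to $X\setminus S$. As a preliminary reduction, note that on $X\setminus S$ the field $\varphi$ coincides with $\varphi^{\vee\vee}$ and $\cE$ coincides with $\cE^{\vee\vee}$, while the singular locus of $\cE^{\vee\vee}$ is contained in $S$; so by Lemma \ref{tors} together with the double--dual remarks it suffices to treat the reflexive case, and I would take $h$ to be the admissible metric constructed above, obtained from a smooth hermitian metric $\wt h$ on the vector bundle $\wt\cE$ over $\wt X$. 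Recall that $\pi$ restricts to a biholomorphism $\wt X\setminus\pi^{-1}(S)\to X\setminus S$, and that over $X\setminus S$ the pair $(\cE,h)$ is identified, through $\pi$, with $(\wt\cE,\wt h)$.

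Next I would pull back $\varphi$. The composite $\pi^*\cE\overset{\pi^*\varphi}{\longrightarrow}\pi^*\Omega^1_X\otimes\pi^*\cE\longrightarrow\pi^*\Omega^1_X\otimes\wt\cE$ has torsion--free target, hence it annihilates the torsion of $\pi^*\cE$ and factors through $\wt\cE=(\pi^*\cE)/\mathrm{torsion}$. This produces a holomorphic section
\[
\wt\varphi\in H^0\bigl(\wt X,\ \pi^*\Omega^1_X\otimes\textit{End}(\wt\cE)\bigr),
\]
which, under the identification above, agrees with $\varphi$ over $\wt X\setminus\pi^{-1}(S)$. The decisive point is to regard $\wt\varphi$ as a section of the bundle built from $\pi^*\Omega^1_X$, and \emph{not} to push it into $\Omega^1_{\wt X}\otimes\textit{End}(\wt\cE)$ via the natural map $\pi^*\Omega^1_X\to\Omega^1_{\wt X}$.

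Finally I would estimate. Equip $\pi^*\Omega^1_X$ with the pull--back $\pi^*g_\omega$ of the metric induced by $\omega$ on $\Omega^1_X$, and $\textit{End}(\wt\cE)$ with the metric induced by $\wt h$. Both are honest smooth hermitian metrics over all of $\wt X$; in particular $\pi^*g_\omega$ is nowhere degenerate, since the pull--back of a metric on a vector bundle is again a metric, irrespective of where $\pi$ degenerates. As $\wt X$ is compact and $\wt\varphi$ is holomorphic, $|\wt\varphi|_{\pi^*g_\omega,\wt h}$ is bounded on $\wt X$. Over $\wt X\setminus\pi^{-1}(S)$ the map $\pi^*\Omega^1_X\to\Omega^1_{\wt X}$ is an isometry onto and $\wt\varphi$ corresponds to $\varphi$, so this norm equals $\pi^*\bigl(|\varphi|_{h,\omega}\bigr)$; hence $|\varphi|_{h,\omega}$ is bounded on $X\setminus S$. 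Square integrability then follows immediately from $\int_X|\varphi|^2_{h,\omega}\,\omega^n\le\bigl(\sup_{X\setminus S}|\varphi|^2_{h,\omega}\bigr)\cdot\mathrm{vol}(X)<\infty$, because $S$ has measure zero.

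The main obstacle is exactly the degeneracy of $\pi^*\omega$ along the exceptional locus $\pi^{-1}(S)$: as a $(1,1)$--form on $\wt X$ it is only semi--positive, so the ``metric'' it induces on $\Omega^1_{\wt X}$ blows up in the contracted directions, and one cannot bound $|\wt\varphi|$ merely by its smoothness. This is circumvented by never passing to $\Omega^1_{\wt X}$: measuring $\wt\varphi$ through the pull--back bundle $\pi^*\Omega^1_X$, whose pull--back metric is nondegenerate everywhere, makes the bound automatic while still reproducing $|\varphi|_{h,\omega}$ downstairs. For a general admissible metric $h$ rather than the constructed one, the same argument goes through once its pull--back to $\wt\cE$ is known to be a bounded metric, which is precisely the content of the Bando--Siu regularity theory.
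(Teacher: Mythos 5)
Your argument is correct and is essentially the paper's own proof written out in detail: the paper simply pulls back $\tr(\varphi\wedge\varphi^*)\wedge\omega^{n-1}$ (which is $|\varphi|^2_{h,\omega}\,\omega^n$ up to a constant) to $\wt X$ and observes that it is bounded in terms of $\omega^n$, and your point that $\pi^*\varphi$ must be measured as a section of $\pi^*\Omega^1_X\otimes\textit{End}(\wt\cE)$ with the nondegenerate pull--back metrics, rather than pushed into $\Omega^1_{\wt X}$, is precisely what makes that one--line claim rigorous. There is no substantive difference in approach.
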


\begin{proof}
We consider $\tr (\varphi\wedge\varphi^*)\wedge \omega^{n-1}$ pulled
back to $\wt X$, where it is easily seen to be bounded in terms of
$\omega^n$.
\end{proof}

\section{Heat equation for Higgs sheaves}

The following is the main result proved here.

\begin{theorem}\label{yang-mills-higgs}
Let $(X,\omega)$ be a compact \ka\ manifold and $(\cE,\varphi)$ a
stable (torsionfree) Higgs sheaf on $(X,\omega)$. Then there exists
an admissible hermitian Yang--Mills--Higgs metric on $(\cE,\varphi)$.
Furthermore, the admissible hermitian Yang--Mills--Higgs
connection is unique.
\end{theorem}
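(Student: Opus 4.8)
The plan is to combine the heat-flow method used by Simpson for Higgs bundles with the regularity and extension techniques of Bando--Siu for sheaves. First I would reduce to the reflexive case: by Lemma \ref{tors} the sheaf $(\cE,\varphi)$ is stable if and only if $(\cE^{\vee\vee},\varphi^{\vee\vee})$ is, and an admissible metric on the double dual restricts to an admissible metric on $\cE$, so I may assume $\cE=\cE^{\vee\vee}$ and work on the open set $X\setminus S$ where $\cE$ is locally free. Using the resolution $\pi\colon\wt X\to X$ and the pull-back construction described above, I fix an admissible background metric $h_0$ on $\cE|X\setminus S$. I would then study the nonlinear parabolic flow
\begin{equation}
h_t^{-1}\frac{\partial h_t}{\partial t}=-\Big(\Lambda_\omega\big(F_{h_t}+[\varphi,\varphi^{*}]\big)-\lambda\,\textrm{Id}_\cE\Big),\qquad h_0\ \text{given},
\end{equation}
where $\varphi^{*}$ is the adjoint with respect to $h_t$ and $\lambda$ is fixed by $\deg\cE$ and $\rk\cE$; the stationary points of this flow are precisely the hermitian Yang--Mills--Higgs metrics of \eqref{h-y-m}.

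The second step is to establish long-time existence together with the a priori estimates needed for convergence. Working on compact subsets of $X\setminus S$ (equivalently, on $(\wt X,\omega_\epsilon)$ and letting $\epsilon\to 0$ as in the construction of $h$ above), short-time existence is standard parabolic theory, and the evolution of the Higgs-twisted mean curvature $\Lambda_\omega(F+[\varphi,\varphi^*])$ satisfies a scalar heat inequality yielding a uniform $C^0$ bound. The admissibility conditions $\mathbf{A_1}$ and $\mathbf{A_2}$, preserved along the flow, then supply the integral control near $S$. The decisive point is to show that $\sup_X|\log(h_t h_0^{-1})|$ stays bounded as $t\to\infty$: if it did not, one extracts from a suitably rescaled weak limit a self-adjoint $L^2_1$ endomorphism whose spectral projections define a coherent subsheaf $\cF\subset\cE$, and $\varphi$ preserves $\cF$ precisely because the flow incorporates $[\varphi,\varphi^*]$; the Chern--Weil computation on $X\setminus S$, extended across $S$ because the codimension of $S$ is at least two, shows that $\cF$ destabilizes $(\cE,\varphi)$, contradicting stability. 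Once this bound is in place, a subsequence of $\{h_t\}$ converges on compacta of $X\setminus S$ to a limit $h_\infty$ solving the Yang--Mills--Higgs equation there.

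I expect the main obstacle to be exactly this Uhlenbeck--Yau step in the simultaneous presence of the singular set $S$ and the Higgs field: one must produce a genuine coherent \emph{Higgs} subsheaf from the weakly holomorphic projection, which requires that the construction of $\cF$ from the $L^2_1$ limit interact correctly with $\varphi$, and at the same time that all the Chern--Weil integrals converge and extend across $S$ (this is where the Bando--Siu machinery, and the boundedness of $\varphi$ established above, are used). Granting this, I would verify that $h_\infty$ is itself admissible with respect to $\omega$ by the same $\omega_\epsilon\to\omega$ estimate used above, so that $F_{h_\infty}$ is square integrable and $\Lambda_\omega F_{h_\infty}$ is bounded, and that \eqref{h-y-m} holds on all of $X\setminus S$.

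Finally, for uniqueness of the connection I would use the classical argument adapted to the Higgs setting. Given two admissible Yang--Mills--Higgs metrics $h$ and $h'$, set $s:=h^{-1}h'$, a positive self-adjoint endomorphism. Subtracting the two copies of \eqref{h-y-m} and pairing against $\log s$ produces, via a Weitzenb\"ock identity that now carries the extra commutator terms coming from $[\varphi,\varphi^*]$, a differential inequality for $\tr(s)$ whose integral over $X$ forces both the $\db$-derivative of $s$ and the bracket $[\varphi,s]$ to vanish. Hence $s$ is a holomorphic, $\varphi$-invariant, self-adjoint automorphism, so its eigenspaces would split $(\cE,\varphi)$ as a Higgs sheaf; stability (simplicity) then forces $s$ to be a scalar, and the two connections coincide.
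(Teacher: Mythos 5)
Your proposal is correct in outline and shares the paper's overall architecture: reduction to the reflexive case via Lemma \ref{tors}, the Higgs heat flow for the augmented curvature $\wt F = F + \varphi\wedge\varphi^* + \varphi^*\wedge\varphi$ run on the resolution $(\wt X,\omega_\epsilon)$ and pushed down by letting $\epsilon\to 0$ to get uniformly admissible solutions $h_t$ on $X\setminus S$, followed by Bando--Siu regularity for the limit. The one step where you genuinely diverge is how stability is used to obtain convergence as $t\to\infty$. You propose the Uhlenbeck--Yau dichotomy: either $\sup_X|\log(h_th_0^{-1})|$ stays bounded, or a rescaled weak limit yields a weakly holomorphic projection, hence a destabilizing Higgs subsheaf, whose Chern--Weil integrals must be controlled and extended across $S$ --- the step you yourself flag as the obstacle, and which would have to be re-proved in the presence of the singular set. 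The paper avoids this entirely: from $\frac{d}{dt}\int_X|\Lambda_\omega\wt F_t|^2\omega^n=-\int_X|\nabla\Lambda_\omega\wt F_t|^2\omega^n$ it selects $t_i\to\infty$ with $\int_X|\nabla\Lambda_\omega\wt F_{t_i}|^2\omega^n\to 0$ and invokes Simpson's compactness theorem to obtain, under stability, a gauge-converging subsequence away from a bubbling set $S'$ of finite codimension-four Hausdorff measure whose limit is already hermitian Yang--Mills--Higgs there; $S'$ is then removed using \cite[Theorem 2]{BS} and elliptic regularity. Your route is closer to Simpson's original argument and would make the role of stability more explicit, but is technically heavier near $S$; the paper's route is shorter at the price of accepting a larger singular set in the intermediate limit. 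For uniqueness the two arguments are essentially the same Bochner computation; note only that the paper applies it to $\sigma=\mathrm{id}_\cE$ as a section of $\textit{End}(\cE)$ with the connection induced by $h\otimes h'^{-1}$, and is careful to use \cite[Theorem 2]{BS} to show $|\sigma|^2$ is bounded and extends subharmonically across $S$ before applying the maximum principle --- your ``integrate over $X$'' step needs exactly this justification --- and it concludes that the two connections agree directly from parallelism of $\mathrm{id}_\cE$, without appealing to simplicity of the stable Higgs sheaf.
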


This theorem will be proved later.

\begin{lemma}
It suffices to prove the theorem under the assumption
that $\cE$ is reflexive.
\end{lemma}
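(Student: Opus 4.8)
The plan is to reduce to the reflexive case by passing to the double dual. Given a stable torsionfree Higgs sheaf $(\cE,\varphi)$, I would form $(\cE^{\vee\vee},\varphi^{\vee\vee})$, which is reflexive and, by Lemma \ref{tors}, again stable. Write $S$ for the locus where $\cE$ fails to be locally free and $S^{\vee\vee}$ for the analogous locus of $\cE^{\vee\vee}$. Since $\cE$ is locally free, hence reflexive, over $X\setminus S$, the natural embedding $\cE\hookrightarrow\cE^{\vee\vee}$ is an isomorphism there; in particular $S^{\vee\vee}\subseteq S$, the difference $S\setminus S^{\vee\vee}$ is an analytic set of codimension at least two, and $\varphi$ agrees with $\varphi^{\vee\vee}$ over $X\setminus S$.

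For existence, assume the theorem holds for the reflexive sheaf $(\cE^{\vee\vee},\varphi^{\vee\vee})$, so that there is an admissible hermitian Yang--Mills--Higgs metric $h$ on $\cE^{\vee\vee}$. As already noted, the restriction of $h$ to $\cE$ is again admissible. Over $X\setminus S$ the two bundles, their metrics, and their Higgs fields coincide, so the identity \eqref{h-y-m} satisfied by $h$ on $X\setminus S^{\vee\vee}$ restricts to the corresponding identity for $\cE$ on $X\setminus S$ with the same constant $\lambda$. Thus the restricted metric is an admissible hermitian Yang--Mills--Higgs metric on $(\cE,\varphi)$.

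For uniqueness, let $h_1,h_2$ be two admissible hermitian Yang--Mills--Higgs metrics on $(\cE,\varphi)$. Over $X\setminus S$ each $h_i$ is simultaneously a metric on $\cE^{\vee\vee}$ satisfying \eqref{h-y-m}. I would recognize $h_1$ and $h_2$ as admissible Yang--Mills--Higgs metrics on $\cE^{\vee\vee}$ over all of $X\setminus S^{\vee\vee}$, after which the uniqueness of the Yang--Mills--Higgs connection in the reflexive case forces the two connections to agree over $X\setminus S^{\vee\vee}$, and hence over the smaller set $X\setminus S$.

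The hard part will be this last reduction: the metrics $h_i$ are a priori defined only over $X\setminus S$, whereas $\cE^{\vee\vee}$ is locally free over the larger set $X\setminus S^{\vee\vee}$. One must therefore extend the associated connections across $S\setminus S^{\vee\vee}$, which is precisely a removable-singularity statement of Bando--Siu type for solutions of \eqref{h-y-m}. The square integrability of the curvature together with the boundedness of $\Lambda_\omega(F+[\varphi,\varphi^*])$ supplied by admissibility, and the codimension bound $\mathrm{codim}_X(S\setminus S^{\vee\vee})\geq 2$, are exactly the hypotheses that make such an extension available, so I expect the reduction to go through once that extension result is in hand.
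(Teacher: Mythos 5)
Your proposal is correct and follows essentially the same route as the paper: existence by restricting the Yang--Mills--Higgs metric from $\cE^{\vee\vee}$ to $\cE$ (the equation being required only on the locally free locus), and uniqueness by reinterpreting an admissible metric on $\cE$ as an admissible metric on $\cE^{\vee\vee}$ in a slightly relaxed sense and invoking the Bando--Siu removable-singularity theorem to see that \eqref{h-y-m} holds wherever $\cE^{\vee\vee}$ is locally free. The ``hard part'' you flag is exactly the point the paper settles by citing \cite[Theorem 2c]{BS}.
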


\begin{proof}
By Lemma~\ref{tors}, the Higgs sheaf
$(\cE^{\vee\vee},\varphi^{\vee\vee})$ is stable. A Hermitian
Yang--Mills--Higgs metric on $\cE^{\vee\vee}$ can be restricted to
$\cE$ as such, since the equation
\eqref{h-y-m} in Definition \ref{de.h-y-m}
is required only on the locally free locus of $\cE$.

We mention that for the existence proof it would be sufficient to
relax the condition of admissibility of the initial hermitian
metric to require the conditions $\mathbf{A_1}$ and $\mathbf{A_2}$
to hold on the complement of some complex
analytic subset in $X$, of codimension
at least two, that contains the set $S$ where $\cE$ fails to be
locally free. In this sense an admissible
metric for $\cE$ would also yield an admissible metric for
$\cE^{\vee\vee}$. It would follow from \cite[Theorem 2c]{BS} that
\eqref{h-y-m} holds on the locally free locus of the given sheaf.

For the uniqueness of the connection, we observe that any admissible
hermitian metric on $\cE$ can be interpreted as an admissible metric
on the double dual $\cE^{\vee\vee}$ in the above slightly more
general sense. A posteriori it satisfies \eqref{h-y-m} wherever
$\cE^{\vee\vee}$ is locally free.
\end{proof}

We first assume that $\cE$ is already a vector bundle on $X$,
equipped with an hermitian metric $h$. We consider the ``augmented
curvature''
\begin{equation}\label{Ftilda}
\wt F = \varphi\wedge\varphi^* + \varphi^*\wedge\varphi + F\, ,
\end{equation}
where $F$ is the curvature of $h$.
For a differentiable family $h_t$ of hermitian metrics $h_t$, $t\geq
0$, we denote the curvature by $F_t$ and set
$$
\wt F_t = \varphi\wedge\varphi^* + \varphi^*\wedge\varphi + F_t\, ,
$$
where the adjoint forms $\varphi^*$ are taken with respect to $h_t$.
The heat equation in the sense of Higgs bundles is
\begin{equation}\label{heateq}
\frac{dh_t}{dt}\cdot h_t^{-1}=-(\ii \Lambda_\omega \wt F_t - \lambda
\cdot \textrm{Id}_\cE)\, ,
\end{equation}
with initial metric $h_0=h$; the constant $\lambda$ is determined by
$$
\int_X\tr (\ii\Lambda_\omega\wt F_t- \lambda\cdot
\textrm{Id}_\cE)\wedge \omega^n
=0\, .
$$
In the latter equation $\wt F_t$ can be replaced by $F_t$, as
$\tr(\varphi\wedge \varphi^* + \varphi^*\wedge \varphi)=0$.

The quantity $\wt F$ in \eqref{Ftilda} and the corresponding heat
equation \eqref{heateq} can be interpreted in terms of a certain
connection, induced by both the hermitian metric and the Higgs
field. However, we will not take this standpoint in our arguments.

The standard equations and estimates for Higgs bundles are formally
the same as in the classical case.

\begin{lemma}
Let $\Box$ denote the Laplacian for differentiable sections of
$\textit{\textit{End}}(\cE)$ on $(X,\omega)$. Then
  \begin{equation}\label{dotcurv}
 \frac{d}{dt}\Lambda_\omega\wt F_t= \Box \Lambda_\omega\wt F_t\, .
 \end{equation}
\end{lemma}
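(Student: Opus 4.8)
My plan is to reduce the asserted identity to the statement that the endomorphism $s_t:=\ii\Lambda_\omega\wt F_t-\lambda\,\textrm{Id}_\cE$ appearing on the right of the flow \eqref{heateq} itself evolves by a heat equation, $\frac{d}{dt}s_t=\Box s_t$. Indeed, multiplying \eqref{dotcurv} by $\ii$ and using that the parallel section $\lambda\,\textrm{Id}_\cE$ is annihilated by $\Box$, the two statements are equivalent. I would run the computation through the Hitchin--Simpson connection $D=D''+D'_t$, where $D''=\db+\varphi$ is \emph{independent} of $t$, and $D'_t=\partial_{h_t}+\varphi^*$ has both its Chern part $\partial_{h_t}$ and its metric-adjoint Higgs part $\varphi^*$ depending on $t$ through $h_t$. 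Holomorphicity $\db\varphi=0$ together with the Higgs condition $\varphi\wedge\varphi=0$ gives $(D'')^2=0$, and by taking $h_t$--adjoints $(D'_t)^2=0$; hence the full curvature is $D^2=D'_tD''+D''D'_t$, and its $(1,1)$--component is exactly $F_t+[\varphi,\varphi^*]=\wt F_t$, the augmented curvature of \eqref{Ftilda}.

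The first step is to differentiate. Since the type decomposition is $t$--independent and only $D'_t$ varies, $\frac{d}{dt}\wt F_t$ is the $(1,1)$--part of $D''(\dot D'_t)$. The second step is the variation formula under the flow: the standard variation of the Chern connection, $\dot\partial_{h_t}=\partial_{h_t}(\eta_t)$, combined with the variation of the adjoint Higgs field, $\dot\varphi^*=[\varphi^*,\eta_t]$, assemble into $\dot D'_t=D'_t(\eta_t)$, where $\eta_t$ is the flow endomorphism determined by \eqref{heateq}. Substituting, the $(1,1)$--part of $D''D'_t(\eta_t)$ is
$$
\frac{d}{dt}\wt F_t=\db\partial_{h_t}\eta_t+[\varphi,[\varphi^*,\eta_t]]\, ,
$$
a closed second--order expression in $\eta_t$ in which the Chern and Higgs contributions appear separately.

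Finally I would apply $\ii\Lambda_\omega$ and invoke the Kähler identities on $\textit{End}(\cE)$. The Nakano relations $[\Lambda_\omega,\db]=-\ii\,\partial_{h_t}^{*}$ and $[\Lambda_\omega,\partial_{h_t}]=\ii\,\dbs$ convert $\ii\Lambda_\omega\db\partial_{h_t}\eta_t$ into the Chern--Laplacian part acting on the section $\eta_t$, while the algebraic term $\ii\Lambda_\omega[\varphi,[\varphi^*,\eta_t]]$ contributes exactly the zeroth--order Higgs piece $[\varphi,[\varphi^*,\eta_t]]$. Together these are precisely the Laplacian $\Box$ of $\textit{End}(\cE)$ built from the Hitchin--Simpson structure, so that $\frac{d}{dt}(\ii\Lambda_\omega\wt F_t)=\Box\eta_t$; tracing the relation $\eta_t=-s_t$ back through $\ii\Lambda_\omega\wt F_t=s_t+\lambda\,\textrm{Id}_\cE$ then yields $\frac{d}{dt}s_t=\Box s_t$, which is \eqref{dotcurv}.

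The routine but delicate part --- and the only genuine obstacle --- is the middle step: pinning down the two variation formulas with the correct signs (this is where the left/right normalization of $\frac{dh_t}{dt}h_t^{-1}$ in \eqref{heateq} must be reconciled with the variation of $\partial_{h_t}$, so that the heat equation comes out forward and not backward), and then verifying that the Kähler identities reproduce the \emph{full} Higgs Laplacian, including the $[\varphi,[\varphi^*,\,\cdot\,]]$ terms, rather than merely the Chern Laplacian. This is exactly where $\db\varphi=0$, $\varphi\wedge\varphi=0$ and the Kähler condition on $\omega$ are used; once this bookkeeping is in place, the computation is formally identical to the classical Hermite--Einstein heat--flow identity, as the remark preceding the lemma anticipates.
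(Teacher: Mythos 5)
Your proof is correct and takes essentially the same route as the paper's: differentiate the (Hitchin--Simpson) augmented curvature along the flow, use the variation formulas for $\partial_{h_t}$ and $\varphi^{*h_t}$, convert $\Lambda_\omega\db\partial_{h_t}$ into the Laplacian via the K\"ahler identities, and substitute the heat equation \eqref{heateq}. The only difference is explicitness: the paper writes the computation formally as in the classical Hermite--Einstein case (its displayed chain literally tracks only $F_t=\db(\partial h_t\cdot h_t^{-1})$, with the zeroth--order contribution $[\varphi,[\varphi^*,\eta_t]]$ from $\dot\varphi^*$ absorbed into the notation and into the meaning of $\Box$), whereas you make that Higgs bookkeeping explicit --- a presentational rather than mathematical difference.
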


\begin{proof}
It follows from \eqref{heateq} that
\begin{eqnarray*}
 \frac{d}{dt}\Lambda_\omega\wt F_t &=&
\frac{d}{dt}\Lambda_\omega(\db(\partial
  h_t \cdot h_t^{-1}))\\
  &=& \Lambda_\omega\db\left(\partial(\frac{d}{dt}h_t
 \cdot h^{-1}_t) +[\frac{d}{dt} h_t
  \cdot h^{-1}_t,\partial h_t\cdot h_t^{-1}]\right)\\
  &=& \Lambda_\omega\db \partial_{h_t}(\frac{d}{dt} h_t
  \cdot h^{-1}_t
  )\\
  &=& \Box \Lambda_\omega\wt F_t\, ,
\end{eqnarray*}
where the connection $\partial_{h_t}$ is taken with respect to
$h_t$.
\end{proof}
Next, equation \eqref{dotcurv} implies immediately that
\begin{equation}
  \frac{d}{dt}(|\Lambda_\omega\wt F_t|^2)=\Box(|\Lambda_\omega\wt
  F_t|^2)-|\nabla\Lambda\wt F_t|^2\, .
\end{equation}
Also, equation \eqref{dotcurv} yields an estimate of
differentiable functions
\begin{equation}\label{estheat}
  \frac{d}{dt}|\Lambda_\omega\wt F_t| \leq \Box |\Lambda_\omega
\wt F_t|\, .
\end{equation}
{}From there
\begin{equation}\label{intLambda}
  \frac{d}{dt} \int_X |\Lambda_\omega\wt F_t|^2 \omega^n = - \int_X
|\nabla \wt F_t|^2 \omega^n
\end{equation}
and
\begin{equation}
\int_X |\Lambda_\omega \wt F_t| \omega^n \leq \int_X |\Lambda_\omega
\wt F_0|\omega^n\, .
\end{equation}
Now the estimate \eqref{estheat} implies
\begin{equation}
  |\Lambda_\omega\wt F_t|(x) \leq \int_X H(t,x,y)|\Lambda_\omega\wt
F_0(y)|
  \omega(y)^n,
\end{equation}
where $H(t,x,y)$ is the heat kernel for differentiable functions on
$(X,\omega)$.

The finite time solutions of the heat equation \eqref{heateq} are
guaranteed by a result of Simpson \cite{Si}, as well as the
convergence of a subsequence of the hermitian metrics to a solution
of the hermitian Yang--Mills--Higgs equation \eqref{h-y-m} after
applying suitable gauge transformations.

Now following \cite{BS} we consider the heat equation \eqref{heateq}
for $(\wt\cE,\pi^* \varphi,\wt h)$ on $(X,\omega_\epsilon)$ with $0<
\epsilon\leq 1$. No assumption of stability is needed in order to
get solutions for all finite $t$ according to \cite{Si}.

For $\wt \cE$ on $(\wt X,\omega_\epsilon)$ we consider the heat
equation \eqref{heateq} and denote the solutions by $\wt
h_{t,\epsilon}$, with augmented curvatures $\wt F_{t,\epsilon}$,
and as before we set $\Lambda_\epsilon$ to be the adjoint
of exterior multiplication
with the form $\omega_\epsilon$. Like in \cite{BS}
the equalities \eqref{dotcurv} --- \eqref{estheat} on $(\wt X,
\omega_\epsilon)$ for $(\wt\cE,\wt h)$ together with
\cite[Proposition 2]{BS} imply that
$$
\Lambda_\epsilon\wt F_{t,\epsilon}\in C^\infty({\wt X}, End(\wt \cE))
$$
are uniformly bounded with respect to $0<\epsilon\leq 1$ and $t\geq
0$. Next, \cite[Lemma 6]{BS} together with the boundedness of
$\Lambda_\epsilon\wt F_{t,\epsilon}$ implies that $\wt
F_{t,\epsilon}$ is square integrable with uniform bound on the norm.
As an application we note that the solutions $h_{t,\epsilon}$ of
\eqref{heateq} are ``uniformly admissible''. The limits
$$
h_t = \lim_{\epsilon\to 0} h_{t,\epsilon}
$$
solve the heat equation on $X\backslash S$ for $\cE$ with admissible
$h_t$ (with curvatures $F_t$) for all $t\geq 0$ and uniform bounds
for $|\Lambda F_t|$ and $\|F_t\|_{L^2}$.

Now with $\epsilon\to 0$ the equation  \eqref{intLambda} holds on
$(X,\omega)$ for $(\cE, h_t)$ implying that
$$
\int_0^\infty \int_X |\nabla\Lambda_\omega \wt F_t|^2 \omega^n \leq
\int_X |\Lambda_\omega\wt F_0|^2 \omega^n\, .
$$
In particular, there exists a sequence of real
numbers $t_i \to \infty$ such that
$$
\int_X |\nabla\Lambda_\omega \wt F_{t_i}|^2 \omega^n \to 0\, .
$$
Under the assumption of stability of
$(\cE,\varphi,\omega)$, on the complement (in $X\backslash S$)
of a subset $S'\subset
X\backslash S$ of finite Hausdorff measure in real codimension four
there exists a subsequence $h_{t_{i(j)}}$ which converges
to a limit $h_\infty$
(after applying suitable gauge transformations). The limit is a
hermitian Yang--Mills--Higgs metric on this part \cite[p. 895]{Si}.
Let $\wt F$ be the augmented curvature of the limit metric. Now
$\Lambda_\omega\wt F$ is bounded, in particular
$\Lambda_\omega F$ is
bounded. By \cite[Theorem 2]{BS} the hermitian metric $h_\infty$ is
in $L^p_{2\textit{loc}}(X\backslash S)$, also $h_\infty$ is locally
bounded on $X\backslash S$. Finally, the ellipticity of the
hermitian Yang--Mills--Higgs equation implies the regularity of
$h_\infty$ on all of $X\backslash S$.

This proves the existence part of Theorem~\ref{yang-mills-higgs}.

The uniqueness part needs some further preparation.

Considering the adjoint action of the Higgs field $\varphi$ on
$\cF:=\textit{End}(\cE)=\cE\otimes \cE^\vee$, we obtain a Higgs
field $\wt\varphi$ on $\textit{End}(\cE)$. Let $h$ and $h'$ be two
admissible hermitian Yang--Mills--Higgs metrics on $(\cE,\varphi)$
with connections $\theta$ and $\theta'$. Then it follows immediately
that $h'^{-1}$ is admissible on $(\cE^\vee, \varphi^\vee)$ and
it is a
hermitian Yang--Mills--Higgs metric. Now
$$
\theta_\cF:=\theta \otimes
id_{\cE^\vee} - id_\cE \otimes \theta'^\vee
$$
is a hermitian Yang--Mills--Higgs
connection on the sheaf of endomorphisms $(\cF, \wt\varphi)$ induced
by $h_\cF=h \otimes h'^{-1}$.

\begin{lemma}\label{flat}
Let $\sigma \in H^0(X,\cF)$ be
any holomorphic section which commutes with the Higgs field, i.e.,
$[\sigma,\varphi]=0$. Then $\sigma$ is parallel with respect to
$\theta_\cF$ over the locally free locus of $\cF$.
\end{lemma}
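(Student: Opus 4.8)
The plan is to carry out the standard Bochner argument of Simpson's Higgs formalism, now applied to the endomorphism sheaf. Since $\wt\varphi$ is the adjoint action $\wt\varphi = [\varphi,\,\cdot\,]$, the two hypotheses on $\sigma$ read $\db\sigma = 0$ (holomorphicity of $\sigma$ as a section of $\cF$) and $\wt\varphi\,\sigma = [\varphi,\sigma] = 0$ (commutation with the Higgs field). Writing $\mathcal{D}'' = \db + \wt\varphi$ and $\mathcal{D}' = \partial_{h_\cF} + \wt\varphi^*$, where $\partial_{h_\cF}$ is the $(1,0)$-part of the connection $\theta_\cF$ and all adjoints are taken with respect to $h_\cF$, the two hypotheses together are equivalent to the single equation $\mathcal{D}''\sigma = 0$. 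The goal is then to deduce $\mathcal{D}'\sigma = 0$. Because $\mathcal{D}'\sigma = \partial_{h_\cF}\sigma + \wt\varphi^*\sigma$ is the sum of a $(1,0)$-part and a $(0,1)$-part, this single vanishing yields simultaneously $\partial_{h_\cF}\sigma = 0$ and $\wt\varphi^*\sigma = 0$; combined with $\db\sigma = 0$, the first of these is exactly the assertion that $\sigma$ is parallel for $\theta_\cF$ over $X\setminus S$.

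First I would invoke the \ka\ identities for Higgs bundles, $(\mathcal{D}'')^* = \ii[\Lambda_\omega,\mathcal{D}']$ and $(\mathcal{D}')^* = -\ii[\Lambda_\omega,\mathcal{D}'']$, which hold for an arbitrary hermitian metric over a \ka\ manifold and are formally the same as the classical ones. They give the comparison of Laplacians $\Delta_{\mathcal{D}'} = \Delta_{\mathcal{D}''} + [\ii\Lambda_\omega G_\cF,\,\cdot\,]$, where $G_\cF$ is the curvature of the metric connection $\mathcal{D}' + \mathcal{D}''$; only its $(1,1)$-component survives under $\Lambda_\omega$, and since $\varphi$, hence $\wt\varphi$, is holomorphic one has $\db\wt\varphi = 0 = \partial_{h_\cF}\wt\varphi^*$, so the only $(1,1)$-contributions from the covariant derivatives of $\wt\varphi$ and $\wt\varphi^*$ drop out and this contracted curvature reduces to the augmented mean curvature $\ii\Lambda_\omega(F_\cF + [\wt\varphi,\wt\varphi^*])$. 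Pairing with $\sigma$ and integrating over $X$, and using that $(\mathcal{D}')^*\sigma = (\mathcal{D}'')^*\sigma = 0$ for a section (a form of degree zero, since $\Lambda_\omega$ annihilates $1$-forms), I obtain
$$
\int_X |\mathcal{D}'\sigma|^2\,\omega^n \;=\; \int_X |\mathcal{D}''\sigma|^2\,\omega^n \;+\; \int_X \big\langle \ii\Lambda_\omega(F_\cF + [\wt\varphi,\wt\varphi^*])\,\sigma,\,\sigma\big\rangle\,\omega^n .
$$
By construction $\theta_\cF$ is a hermitian Yang--Mills--Higgs connection on $(\cF,\wt\varphi)$: the augmented mean curvatures of $h$ on $\cE$ and of $h'^{-1}$ on $\cE^\vee$ equal $\lambda\cdot\mathrm{Id}$ and $-\lambda\cdot\mathrm{Id}$, so their sum, the augmented mean curvature $\ii\Lambda_\omega(F_\cF + [\wt\varphi,\wt\varphi^*])$ of $h_\cF$, vanishes identically. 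Since also $\mathcal{D}''\sigma = 0$, both terms on the right vanish, whence $\int_X|\mathcal{D}'\sigma|^2\,\omega^n = 0$ and therefore $\mathcal{D}'\sigma = 0$.

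The step I expect to be the main obstacle is not this formal computation but its justification over the locally free locus $X\setminus S$, which is noncompact: the integration by parts leading to the displayed identity, together with the vanishing of all boundary contributions, must be controlled across the analytic set $S$. Here I would use that $h_\cF$ is admissible, so $F_\cF$ is square integrable and $\Lambda_\omega F_\cF$ is bounded; that $\varphi$, and hence $\wt\varphi$ and $\wt\varphi^*$, is bounded by the earlier lemma; and that $\sigma$ is a bounded global holomorphic section of the reflexive sheaf $\cF$. Because $\mathrm{codim}_X S \ge 2$, one inserts cutoff functions equal to $1$ off a shrinking neighbourhood of $S$ whose differentials carry $L^2$-mass tending to $0$, exactly as in the proof of the degree lemma and as in \cite{BS}; the $L^2$-bounds on $\partial_{h_\cF}\sigma$, $\wt\varphi^*\sigma$ and the curvature then force the error terms to vanish in the limit, so the identity and its consequence remain valid on $X\setminus S$. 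This yields $\partial_{h_\cF}\sigma = 0$ (and, as a by-product, $\wt\varphi^*\sigma = 0$) on the locally free locus, which is precisely the claimed parallelism of $\sigma$ with respect to $\theta_\cF$.
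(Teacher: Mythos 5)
Your formal strategy is a legitimate alternative to the paper's: you run Simpson's integrated Weitzenb\"ock comparison $\Delta_{\mathcal D'}=\Delta_{\mathcal D''}+[\ii\Lambda_\omega(F_\cF+[\wt\varphi,\wt\varphi^*]),\cdot\,]$ and use that the augmented mean curvature of $h_\cF=h\otimes h'^{-1}$ vanishes (the two Yang--Mills--Higgs constants being $\lambda$ and $-\lambda$, consistently with $\deg\cF=0$), whereas the paper works pointwise: it writes $\Box(|\sigma|^2)=|\partial_{\theta_\cF}\sigma|^2-\langle[\Lambda_\omega F,\sigma],\sigma\rangle$, uses $[\sigma,\varphi]=0$ to identify the curvature term with $|[\varphi^*,\sigma]|^2\geq 0$, concludes that $|\sigma|^2$ is subharmonic, extends it across $S$ as a bounded subharmonic function via \cite[Theorem 2]{BS}, and applies the maximum principle on the compact manifold $X$ to force $|\sigma|$ constant and hence $\partial_{\theta_\cF}\sigma=0$. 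The two routes prove the same inequality; the difference is entirely in how the singular set $S$ is handled, and this is where your write-up has a genuine gap.

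Concretely: your cutoff argument needs two a priori facts that you assert but do not establish. First, the boundedness of $|\sigma|_{h_\cF}$ is not automatic --- $h_\cF$ is only an admissible (possibly degenerating) metric near $S$ --- and is precisely what \cite[Theorem 2]{BS} supplies; you should cite it rather than treat boundedness as given. Second, and more seriously, to make the error terms $\int|d\chi_\epsilon|\,|\sigma|\,|\mathcal D'\sigma|$ vanish you invoke an ``$L^2$-bound on $\partial_{h_\cF}\sigma$'' near $S$, but no such bound is available at the outset; it is exactly the quantity you are trying to control. This can be repaired --- e.g.\ by a Cauchy--Schwarz absorption, $\int\chi_\epsilon^2|\partial_{h_\cF}\sigma|^2\leq \tfrac12\int\chi_\epsilon^2|\partial_{h_\cF}\sigma|^2+C\|\sigma\|_\infty^2\|d\chi_\epsilon\|_{L^2}^2$, using only the boundedness of $\sigma$ and $\|d\chi_\epsilon\|_{L^2}\to 0$ for $\mathrm{codim}_X S\geq 2$, provided one first knows $|\sigma|^2\in H^1_{loc}$ across $S$ so that the integration by parts sees no contribution supported on $S$ --- but as written the step is circular. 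The paper's maximum-principle route is designed to avoid exactly this: once $|\sigma|^2$ extends subharmonically to all of $X$, no integration by parts over the noncompact locus $X\setminus S$ is needed at all. Either adopt that device or supply the absorption argument explicitly.
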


\begin{proof}
On the complement $X\backslash S$ of the singular locus of the
sheaf $\cF$, the pointwise norm of any given section $\sigma$
satisfies
$$
\Box(|\sigma|^2)(x) = |\partial_{\theta_\cF} \sigma|^2 (x)- \langle
[\Lambda_\omega  F,\sigma],\sigma\rangle(x)\, .
$$
As $\deg \cF=0$, and $[\sigma,\varphi]=0$, we have (pointwise)
$$
- \langle [\Lambda_\omega
F,\sigma],\sigma\rangle=\langle\Lambda_\omega[\wt\varphi,
\wt\varphi^*](\sigma),\sigma^*
\rangle=\langle\Lambda_\omega[\varphi,
[\varphi^*,\sigma]],\sigma\rangle=\langle
[\varphi^*,\sigma],[\varphi^*,\sigma]\rangle\geq 0\, .
$$
Hence
\begin{equation}\label{laplgeqz}
\Box(|\sigma|^2) \geq |\partial_{\theta_\cF}\sigma|^2 \geq 0
\textrm{\quad over \quad } X\backslash S\, .
\end{equation}
Now by \cite[Theorem 2]{BS} the pointwise norm $|\sigma|$ is
bounded, and $|\sigma|^2$ can be extended to $X$ as a subharmonic
function. The maximum principle shows that $|\sigma|$ is constant so
that \eqref{laplgeqz} implies $\partial_{\theta_\cF} \sigma=0$.
Hence $\sigma$ is a flat section.
\end{proof}

Now we prove the uniqueness part of Theorem~\ref{yang-mills-higgs}.

The assumptions of Lemma~\ref{flat} are satisfied for
$\sigma= id_\cE$. So the identity map of
$\cE$ is a flat section of
$\cF$ with respect to $\theta_\cF$, i.e.,
$$
\theta\circ id_\cE= id_\cE \circ \theta'\, ,
$$
and the two connections agree. \qed

\begin{corollary}\label{YM-cor.}
Let $(X,\omega)$ be a compact \ka\ manifold and $(\cE,\varphi)$ a
torsionfree Higgs sheaf on $(X,\omega)$. There exists
an admissible hermitian Yang--Mills--Higgs metric on $(\cE,\varphi)$
if and only if $(\cE,\varphi)$ is polystable.
Furthermore, a polystable Higgs sheaf admits a unique
admissible hermitian Yang--Mills--Higgs connection.
\end{corollary}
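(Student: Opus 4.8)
The strategy is to derive the corollary from Theorem~\ref{yang-mills-higgs} and Lemma~\ref{flat}. By Lemma~\ref{tors}, together with the fact (already used in the proof of Theorem~\ref{yang-mills-higgs}) that an admissible hermitian Yang--Mills--Higgs metric on $\cE^{\vee\vee}$ restricts to one on $\cE$ and, conversely, that an admissible metric on $\cE$ may be read as such a metric on $\cE^{\vee\vee}$ in the slightly weakened sense allowed there, I would reduce everything to the case in which $\cE$ is reflexive. Throughout, write $\mu(\cE)=\deg\cE/\rk\cE$ for the slope.

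Suppose first that $(\cE,\varphi)$ is polystable, say $(\cE,\varphi)=\bigoplus_i(\cF_i,\varphi_i)$ with each $(\cF_i,\varphi_i)$ stable of slope $\mu(\cF_i)=\mu(\cE)$, as in Definition~\ref{de.st.sh.}. Theorem~\ref{yang-mills-higgs} provides an admissible solution $h_i$ of \eqref{h-y-m} on each summand, with some constant $\lambda_i$. Taking the trace of \eqref{h-y-m}, integrating against $\omega^n$ and using $\tr[\varphi,\varphi^*]=0$, one sees that $\lambda_i$ is determined by $\mu(\cF_i)$ and the volume of $X$ alone; hence all $\lambda_i$ coincide. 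The orthogonal direct sum $h=\bigoplus_i h_i$ is then admissible and satisfies \eqref{h-y-m} with this common constant, which proves the existence half of the equivalence.

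For the converse, assume $(\cE,\varphi)$ carries an admissible solution $h$ of \eqref{h-y-m}. The analytic heart of the argument is a Chern--Weil estimate for a Higgs subsheaf $\cF\subset\cE$: computing $\deg\cF$ from the orthogonal projection onto $\cF$, with the curvature replaced by the augmented curvature $F+[\varphi,\varphi^*]$, the second fundamental form of $\cF$ and the off--diagonal part of $\varphi$ enter with a favourable sign, giving $\mu(\cF)\le\mu(\cE)$, and moreover equality forces the projection to be holomorphic, parallel and to commute with $\varphi$. This is carried out as in \cite{BS}, the only new ingredient being the bounded, square--integrable term $[\varphi,\varphi^*]$; it is here that the singular set $S$ and the mere admissibility of $h$ are handled, and that the a priori only $L^2_1$ weakly holomorphic projection is shown to be regular. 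Granting this, one proceeds by induction on $\rk\cE$: if $(\cE,\varphi)$ is stable it is already polystable, and otherwise a proper Higgs subsheaf of slope $\mu(\cE)$ gives, by the equality case, an orthogonal $\varphi$--invariant holomorphic splitting $\cE=\cF\oplus\cF^\perp$ in which both factors again solve \eqref{h-y-m}; iterating yields a decomposition into stable summands all of slope $\mu(\cE)$. Equivalently, Lemma~\ref{flat} shows that the algebra of holomorphic Higgs endomorphisms of $\cE$ consists of parallel sections and is closed under adjoints---its proof in fact yields $[\varphi^*,\sigma]=0$ as well, so that $\sigma^*$ is again a holomorphic Higgs endomorphism---whence it is a finite--dimensional $C^\ast$--algebra and its self--adjoint idempotents realize the same polystable decomposition.

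The uniqueness of the connection follows exactly as in the stable case. Given two solutions $h,h'$ of \eqref{h-y-m}, which necessarily share the constant $\lambda$, the induced connection $\theta_\cF=\theta\otimes \mathrm{id}_{\cE^\vee}-\mathrm{id}_\cE\otimes\theta'^\vee$ on $(\textit{End}(\cE),\wt\varphi)$ is again of hermitian Yang--Mills--Higgs type, and $\mathrm{id}_\cE$ is a holomorphic section commuting with $\varphi$; since the proof of Lemma~\ref{flat} uses no stability assumption, $\mathrm{id}_\cE$ is $\theta_\cF$--parallel, forcing $\theta=\theta'$. I expect the main obstacle to be the analytic step of the converse: establishing the slope inequality together with the regularity and $\varphi$--compatibility of the projection for a reflexive sheaf equipped with an only admissible metric, across the singular locus $S$ and in the presence of the extra Higgs term, which is precisely where the Bando--Siu machinery is indispensable.
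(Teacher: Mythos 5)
Your proposal is correct and follows essentially the same route as the paper: existence by summing the metrics from Theorem~\ref{yang-mills-higgs} on the stable summands (the constants agree because the slopes do), the converse by the Chern--Weil slope inequality for Higgs subsheaves of an admissible solution across the codimension--two singular set, which the paper simply delegates to Proposition~3.3 of Simpson together with the Bando--Siu theory, and uniqueness via Lemma~\ref{flat} applied to $\mathrm{id}_\cE$, whose proof indeed uses no stability. The only difference is one of exposition: you reconstruct the content of the cited references rather than invoking them.
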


\begin{proof}
Since a polystable Higgs sheaf is a direct sum of stable Higgs
sheaves of same slope (= $\frac{\text{degree}}{\text{rank}}$),
it follows from Theorem
\ref{yang-mills-higgs} that a polystable Higgs sheaf
admits a unique admissible hermitian Yang--Mills--Higgs connection.
The decomposition of a polystable Higgs sheaf into a direct sum of
stable Higgs sheaves is an orthogonal decomposition.

To prove the converse, assume that $(\cE,\varphi)$ has an
admissible hermitian Yang--Mills--Higgs metric $h$. So $h$
is a nonsingular hermitian metric on the vector bundle
$\cE\vert{X\setminus S}$ satisfying the
hermitian Yang--Mills--Higgs equation, where $X\setminus S$
is the subset where $\cE$ is locally free. Since the complex
codimension of $S$ is at least two, from the
hermitian Yang--Mills--Higgs equation it follows that
$(\cE,\varphi)$ is polystable (see
\cite[p. 878, Proposition 3.3]{Si}).
\end{proof}

\section{Hermitian Yang--Mills--Higgs connection on
a Higgs $G$--sheaf}

As before, let $X$ be a compact connected K\"ahler manifold
equipped with a K\"ahler form $\omega$.

\begin{definition}
{\rm By a \textit{large open subset of $X$} we will mean a dense
open subset $U$ of $X$ such that the complement $X\setminus
U$ is a complex
analytic subspace of $X$ of complex codimension at least two.}
\end{definition}

Let $G$ be a connected
reductive linear algebraic group defined over
the field of complex numbers. The Lie algebra of $G$ will be
denoted by $\mathfrak g$. Let
\begin{equation}\label{eq0}
{\mathfrak g}'\, := \, [{\mathfrak g}\, , {\mathfrak g}]
\end{equation}
be the semisimple part of $\mathfrak g$. Let $\z({\mathfrak g})
\subset\, {\mathfrak g}$ be the center of ${\mathfrak g}$. The
projection
\begin{equation}\label{0eq0}
{\mathfrak g}\,\longrightarrow\, {\mathfrak g}/\z({\mathfrak g})
\end{equation}
identifies ${\mathfrak g}'$ with the quotient ${\mathfrak
g}/\z({\mathfrak g})$.

We recall from \cite{GS} the definition of a principal $G$--sheaf.

A \textit{principal $G$--sheaf} over $X$ is a triple of the form
$(E_G\, , E \, ,\psi)$, where
\begin{enumerate}
\item $E_G$ is a rational principal $G$--bundle over $X$,
which means that $E_G$ is a holomorphic principal $G$--bundle over
some large open subset $U$ of $X$,

\item for any character $\chi$ of $G$, the holomorphic line
bundle $E_G\times^\chi {\mathbb C}$ over $U$ associated to
$E_G$ for $\chi$ extends to a holomorphic line bundle over $X$,

\item $E$ is a torsionfree coherent analytic sheaf on $X$,
and

\item
\begin{equation}\label{00}
\psi\, :\, E_G({\mathfrak g}') \,\longrightarrow
\, E\vert_U
\end{equation}
is a holomorphic isomorphism of vector bundles over a large open
subset $U$ over which $E_G$ is a holomorphic principal $G$--bundle
and $E$ is locally free,
where $E_G({\mathfrak g}')$ is the vector bundle over $U$ associated
to $E_G$ for the $G$--module ${\mathfrak g}'$ defined in \eqref{eq0}.
\end{enumerate}

We note that the second condition that $E_G\times^\chi {\mathbb C}$
extends to a holomorphic line bundle over $X$ is automatically
satisfied when $X$ is a complex projective manifold. Since the
adjoint bundle $\text{ad}(E_G)$ is the direct sum of $E_G({\mathfrak 
g}')$ with the trivial vector bundle with fiber $\z({\mathfrak g})$,
the fourth condition ensures that the adjoint bundle $\text{ad}(E_G)$ 
over $U$ extends to $X$ as a torsionfree coherent analytic sheaf.

\begin{remark}\label{rem1}
{\rm The above mentioned large open subset $U$ is not a part of the
definition of a principal $G$--sheaf. In other words, we do not
distinguish between the two $G$--sheaves given by $(E\, ,U\, ,E_G\,
,\psi)$ and $(E\, ,U'\, ,E'_G\, ,\psi')$ respectively where
$E_G\vert_{U\cap U'}\, =\, E'_G\vert_{U\cap U'}$ and
$\psi\vert_{U\cap U'} \, =\, \psi'\vert_{U\cap U'}$. However, we may
take $U$ to be the open subset of $X$ over which the
torsionfree coherent analytic sheaf $E$ is a vector bundle. In this
sense, there is a natural choice of the large open subset $U$. We
note that the open subset over which $E$ is a vector bundle
is also the largest open subset over which $E_G$ is a holomorphic
principal $G$--bundle. (See \cite{GS} for the details.)}
\end{remark}

We will now define a principal Higgs $G$--sheaf.

\begin{definition}\label{g-higgs-def}
{\rm A \textit{principal Higgs $G$--sheaf} on $X$ consists
of data of the following type:
\begin{itemize}
\item{} a principal $G$--sheaf $(E_G\, , E \, ,\psi)$ on $X$,

\item{} a holomorphic section
\[
\varphi\, \in\, H^0(U, \, \Omega^1_X\otimes {\rm ad}(E_G))
\]
of the adjoint bundle ${\rm ad}(E_G) \, :=\, E_G({\mathfrak g})$
defined on the large open subset $U\,\subset\, X$ over which $E$ is
locally free (see Remark \ref{rem1}), and

\item{} a holomorphic homomorphism of coherent analytic sheaves
\[
\widehat{\varphi}\, :\, E \, \longrightarrow\, \Omega^1_X\otimes E
\]
\end{itemize}
satisfying the following two conditions:
\begin{enumerate}
\item{} the composition
\[
E \, \stackrel{\widehat{\varphi}}{\longrightarrow}\, \Omega^1_X
\otimes E\, \stackrel{\widehat{\varphi}}{\longrightarrow}\,
\Omega^2_X\otimes E
\]
vanishes identically, and

\item{} the restriction of $\widehat{\varphi}$ to the large open
subset $U$ coincides, using $\psi$ (in \eqref{00}), with the
homomorphism $E_G({\mathfrak g}')\longrightarrow \Omega^1_X
\otimes E_G({\mathfrak g}')$ defined by $\alpha \longmapsto
[\varphi ,\alpha]$.
\end{enumerate}}
\end{definition}

We note that for any principal Higgs $G$--sheaf $(E_G\, , E \,
,\psi\, , \varphi\, ,\widehat{\varphi})$, the pair
$(E \, , \widehat{\varphi})$ is a Higgs sheaf. Similarly, the
pair $(E_G({\mathfrak g}')\, , \varphi)$ is so, and furthermore,
these two Higgs sheaves
are identified using the isomorphism $\psi$.

A principal Higgs $G$--sheaf $(E_G\, , E \, ,\psi\, ,
\varphi\, ,\widehat{\varphi})$ is called
\textit{stable} (respectively, \textit{semistable}) if for
every triple of the form $(U'\, , Q\, , E_Q)$, where
\begin{itemize}
\item $U'\,\subset\, M$ is a large open subset
contained in the open subset of $M$ over which
$E_G$ is a holomorphic principal $G$--bundle,

\item $Q\, \subset\, G$ is a proper maximal parabolic
subgroup, and

\item
\begin{equation}\label{01}
E_Q \,\subset\, E_G\vert_{U'}
\end{equation}
is a holomorphic reduction of structure group of $E_G\vert_{U'}$
to $Q$ over $U'$ such that $\varphi\vert_{U'}$ is a section of
$\Omega^1_X\otimes{\rm ad}(E_Q)$,
\end{itemize}
the following inequality
\begin{equation}\label{0}
\text{degree}({\rm ad}(E_G\vert_{U'})/{\rm ad}(E_Q)) \, > \, 0
\end{equation}
(respectively, $\text{degree}({\rm ad}(E_G\vert_{U'})/{\rm ad}(E_Q))
\,\geq \, 0$) holds.

Since ${\rm ad}(E_Q)$ is a subbundle of ${\rm ad}(E_G)$ over a
large open subset, and ${\rm ad}(E_G)$ extends to $X$ as a
coherent analytic sheaf, it follows that ${\rm ad}(E_Q)$ also
extends to $X$ as a coherent analytic sheaf.

By a \textit{Levi subgroup} of a parabolic subgroup
$P\,\subset\, G$ we will mean a connected reductive
subgroup of $P$ whose projection to the quotient $P/R_u(P)$
is an isomorphism, where $R_u(P)$ is the unipotent radical
of $P$.

A principal Higgs $G$--sheaf
$$
(E_G\, , E \, ,\psi\, , \varphi\, ,\widehat{\varphi})
$$
is called \textit{polystable} if either
$(E_G\, , E \, ,\psi\, ,
\varphi\, ,\widehat{\varphi})$ is stable, or there
is a pair $(L(P)\, , E_{L(P)})$, where
\begin{itemize}

\item $L(P)\,\subset \, P\,\subset \, G$ is a Levi
subgroup of some parabolic subgroup $P$ of $G$, and

\item $E_{L(P)} \,\subset\, E_G\vert_{U}$ is a holomorphic
reduction of structure group to $L(P)\,\subset\, G$, over the large
open subset $U$ over which $E$ is locally free, with the property
that the section $\varphi\vert_U$ lies in the image of the
natural inclusion
\[
H^0(U,\,\Omega^1_X\otimes{\rm ad}(E_{L(P)}))\,\hookrightarrow
\, H^0(U, \, \Omega^1_X\otimes{\rm ad}(E_G\vert_U))
\]
\end{itemize}
such that the following two hold:
\begin{enumerate}
\item the principal Higgs $L(P)$--bundle
$(E_{L(P)}\, , E' \, ,\psi\, ,
\varphi\, ,\widehat{\varphi})$ is stable,
where $E'$ is the coherent analytic subsheaf of
$E$ generated by ${\rm ad}(E_{L(P)})$ using $\psi$,
and

\item for each
character $\chi$ of $L(P)$ which is trivial on
the center of $G$, the line bundle $E_{L(P)}(\chi)$
over $U$ associated to $E_{L(P)}$ for the character
$\chi$ is of degree zero.
\end{enumerate}
Note that there is a natural
inclusion of ${\rm ad}(E_{L(P)})$ in ${\rm ad}(E_{G})$;
hence there is a natural homomorphism from ${\rm ad}(E_{L(P)})$
to $E_G({\mathfrak g}')$. (See \cite{RS}, \cite{AB}
for the definition of polystable principal bundles.)

We will now define hermitian Yang--Mills--Higgs connections
on principal $G$--sheaves.

Fix a maximal compact subgroup
\begin{equation}\label{eq2a}
K(G)\,\subset\, G\, .
\end{equation}
If $E'_G$ is a holomorphic principal $G$--bundle
over a complex manifold, and $E'_{K(G)} \,\subset\,
E'_G$ is a $C^\infty$ reduction of structure group
of $E'_G$ to $K(G)$, then the $G$--bundle
$E'_{G}$ has a unique complex connection
which is induced by a connection on
$E'_{K(G)}$. This unique connection
will be called the \textit{Chern connection}.

Set
\begin{equation}\label{eq2}
Z \, :=\, G/[G\, , G]
\end{equation}
to be the quotient group, which is a product of
copies of ${\mathbb G}_m \, =\, {\mathbb C}^*$. Note
that $Z$ is a finite quotient of the connected component,
containing the identity element, of the center of $G$.

Let $E_G$ be a holomorphic principal $G$--bundle over a large open
subset $U$ of $X$. Let $\varphi$ be a Higgs field on $E_G$ over $U$.
Let
\begin{equation}\label{eq3}
E_Z\, := \, E_G(Z)
\end{equation}
be the principal $Z$--bundle over $U$ obtained
by extending the structure group of $E_G$ using
the quotient map $G\,\longrightarrow \, Z$
in \eqref{eq2}. The Higgs field on $E_Z$ over $U$
induced by $\varphi$ will be denoted by
$\varphi^z$.

The above defined principal Higgs $Z$--bundle
$(E_Z\, ,\varphi^z)$ extends
to a holomorphic principal Higgs $Z$--bundle over $X$.
Indeed, this follows from the facts that
$Z$ is a product of copies of ${\mathbb C}^*$,
and any holomorphic line bundle over $U$ extends
to a holomorphic line bundle over $X$. To
see that any holomorphic line bundle $L$ over $U$
extends to $X$, consider the determinant line bundle
$\det (\iota_* L)$ over $X$, where $\iota$ is the
inclusion map of $U$ in $X$. See \cite[Ch. V, \S 6]{Ko}
for the construction of the determinant line bundle
of a torsionfree coherent analytic sheaf on $X$;
note that from the condition that the codimension
of the complex analytic set $X\setminus U
\,\subset \, X$ is at least two
it follows that the direct image $\iota_* L$ is
a coherent analytic sheaf on $X$. The holomorphic
extension of $E_Z$ to $X$ is clearly unique.
Since $\varphi^z$ is a holomorphic section of
$\Omega^1_X\otimes_{\mathbb C} \mathfrak{z}$ over $U$,
where $\mathfrak z$ is the Lie algebra of $Z$,
and the codimension of the complex analytic set $X
\setminus \,\subset \, X$ is at least two, the section
$\varphi^z$ extends to a holomorphic section of
$\Omega^1_X\otimes_{\mathbb C} \mathfrak{z}$ over $X$.

Since any Higgs line bundle over $X$ has a
unique hermitian Yang--Mills--Higgs connection,
any holomorphic principal Higgs $Z$--bundle over $X$ also
has a unique hermitian Yang--Mills--Higgs connection.

Let $(E_G\, , E \, ,\psi\, ,\varphi\, ,\widehat{\varphi})$ be a
principal Higgs $G$--sheaf on $(X,\omega)$.
Let $U\,\subset \, X$ be the large
open subset over which $E_G$ is a holomorphic principal $G$--bundle
(see Remark \ref{rem1}). A \textit{hermitian Yang--Mills--Higgs
connection} on $(E_G\, , E \, ,\psi\, ,\varphi\,
,\widehat{\varphi})$ is a Chern connection $\nabla$ on the principal
$G$--bundle $E_G$ over $U$ satisfying the following two conditions:
\begin{enumerate}
\item{} the connection on the principal Higgs $Z$--bundle
$(E_Z\, ,\varphi^z)$ (defined in \eqref{eq3})
induced by $\nabla$ coincides with the unique
hermitian Yang--Mills--Higgs connection on the extension
of $(E_Z\, ,\varphi^z)$ to $X$ (recall that $(E_Z\, ,\varphi^z)$
extends holomorphically
to $M$, and the extension has a unique hermitian Yang--Mills--Higgs
connection); and

\item{} the connection on $E\vert_U$ induced
by $\nabla$ and $\psi$ is an admissible hermitian
Yang--Mills--Higgs connection on the reflexive Higgs sheaf
$(E^{\vee\vee}\, ,\widehat{\varphi})$ (the connection $\nabla$
induces a connection on the associated
vector bundle $E_G({\mathfrak g}')$ in \eqref{00},
and using the isomorphism $\psi$ in \eqref{00}, this induced
connection gives a connection on $E\vert_U$).
\end{enumerate}

Let $(\cE\, ,\varphi)$ and $({\cE}',\varphi')$
be two Higgs sheaves on $X$. Define
\begin{equation}\label{de.cV}
{\mathcal V}\, :=\, (\cE\otimes {\cE}')/\text{torsion}\, .
\end{equation}
The Higgs fields $\varphi$ and $\varphi'$ together induce
a Higgs field $\theta$ on ${\mathcal V}$. The description
of $\theta$ is the following:
\begin{equation}\label{de.th}
\theta\, =\, \varphi\otimes\text{Id}_{{\cE}'} +
\text{Id}_{\cE}\otimes \varphi'\, .
\end{equation}

\begin{lemma}\label{lem1.G}
Assume that the two Higgs sheaves
$(\cE\, ,\varphi)$ and $({\cE}',\varphi')$
are both polystable. Then the
Higgs sheaf $({\mathcal V}\, ,\theta)$,
defined in \eqref{de.cV} and \eqref{de.th}, is also
polystable.
\end{lemma}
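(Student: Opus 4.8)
The plan is to prove polystability of $(\mathcal{V},\theta)$ by passing through the Kobayashi--Hitchin correspondence established in Corollary~\ref{YM-cor.}, rather than working directly with Higgs subsheaves and slope inequalities. First I would reduce to the reflexive case: by Lemma~\ref{tors} it suffices to treat $(\cE^{\vee\vee},\varphi^{\vee\vee})$ and $(\cE'^{\vee\vee},\varphi'^{\vee\vee})$, so I may assume $\cE$ and $\cE'$ are reflexive, and then it is enough to prove polystability of $(\mathcal{V}^{\vee\vee},\theta^{\vee\vee})$. Next, since each factor is polystable, Corollary~\ref{YM-cor.} supplies admissible hermitian Yang--Mills--Higgs metrics $h$ on $(\cE,\varphi)$ and $h'$ on $(\cE',\varphi')$, with $\Lambda_\omega(F+[\varphi,\varphi^*])=\lambda\cdot\mathrm{Id}_\cE$ and $\Lambda_\omega(F'+[\varphi',\varphi'^*])=\lambda'\cdot\mathrm{Id}_{\cE'}$ over the respective locally free loci.

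The heart of the argument is to produce, over the large open subset $U$ where both $\cE$ and $\cE'$ are locally free (so that $\mathcal{V}=\cE\otimes\cE'$ there, and $X\setminus U$ has complex codimension at least two), an admissible hermitian Yang--Mills--Higgs metric on $(\mathcal{V},\theta)$ and then invoke the converse direction of Corollary~\ref{YM-cor.}. I would take the tensor product metric $h\otimes h'$ on $\cE\otimes\cE'$. Its curvature is $F_{\mathcal{V}}=F\otimes\mathrm{Id}_{\cE'}+\mathrm{Id}_\cE\otimes F'$, and from the description \eqref{de.th} of $\theta$ one computes the augmented term $[\theta,\theta^*]=[\varphi,\varphi^*]\otimes\mathrm{Id}_{\cE'}+\mathrm{Id}_\cE\otimes[\varphi',\varphi'^*]$, the cross terms cancelling because $\varphi$ acts only on the first factor and $\varphi'$ only on the second. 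Applying $\Lambda_\omega$ and using that $\Lambda_\omega$ commutes with the tensor decomposition, the two Yang--Mills--Higgs equations combine to give
\[
\Lambda_\omega(F_{\mathcal{V}}+[\theta,\theta^*])
=\lambda\cdot\mathrm{Id}_{\cE\otimes\cE'}+\lambda'\cdot\mathrm{Id}_{\cE\otimes\cE'}
=(\lambda+\lambda')\cdot\mathrm{Id}_{\mathcal{V}}
\]
over $U$, so $h\otimes h'$ satisfies \eqref{h-y-m} with constant $\lambda+\lambda'$.

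It remains to check admissibility of $h\otimes h'$ so that Corollary~\ref{YM-cor.} applies. Conditions $\mathbf{A_1}$ and $\mathbf{A_2}$ for the product metric follow from those for $h$ and $h'$: since $\Lambda_\omega F$ and $\Lambda_\omega F'$ are bounded, so is $\Lambda_\omega F_{\mathcal{V}}$, giving $\mathbf{A_2}$; and the square-integrability of $F_{\mathcal{V}}$ follows from that of $F$ and $F'$ together with the pointwise boundedness provided by $\mathbf{A_2}$, estimating $\lvert F_{\mathcal{V}}\rvert\lesssim\lvert F\rvert+\lvert F'\rvert$ in the $L^2$ norm over $U$. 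Having established that $(\mathcal{V},\theta)$ carries an admissible hermitian Yang--Mills--Higgs metric, the converse half of Corollary~\ref{YM-cor.} yields that $(\mathcal{V}^{\vee\vee},\theta^{\vee\vee})$, and hence $(\mathcal{V},\theta)$, is polystable. I expect the main obstacle to be the admissibility bookkeeping near $X\setminus U$: one must be careful that the tensor product metric, defined a priori only over the common locally free locus, does not acquire worse singularities than $h$ and $h'$ separately, and that the codimension-two complement is genuinely negligible for both the $L^2$ curvature estimate and the application of the Bando--Siu extension results underlying Corollary~\ref{YM-cor.}.
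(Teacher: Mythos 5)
Your proposal is correct and follows essentially the same route as the paper: the authors also pass to the double duals, take the hermitian Yang--Mills--Higgs connections $\nabla$ and $\nabla'$ supplied by Corollary~\ref{YM-cor.}, observe that $\nabla\otimes\mathrm{Id}+\mathrm{Id}\otimes\nabla'$ is a hermitian Yang--Mills--Higgs connection on $(\mathcal{V},\theta)$, and conclude polystability from the converse direction of Corollary~\ref{YM-cor.}. Your write-up merely makes explicit the curvature computation, the cancellation of the cross terms in $[\theta,\theta^*]$, and the admissibility check that the paper leaves as ``easy to see.''
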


\begin{proof}
Consider the double dual $\cE^{\vee\vee}$ equipped with the
Higgs field induced by $\varphi$. This induced Higgs field on
$\cE^{\vee\vee}$ will be denoted by $\varphi^{\vee\vee}$.
Since $(\cE\, ,\varphi)$ is polystable, it follows that
$(\cE^{\vee\vee}\, ,\varphi^{\vee\vee})$ is also polystable
(see Lemma \ref{tors}). Let
$\nabla$ be the unique hermitian Yang--Mills--Higgs connection on
$(\cE^{\vee\vee}\, ,\varphi^{\vee\vee})$ given by
Corollary \ref{YM-cor.}. Similarly, let $\nabla'$
be the unique hermitian Yang--Mills--Higgs connection on
$((\cE')^{\vee\vee}\, ,(\varphi')^{\vee\vee})$; as before,
the Higgs field
on $(\cE')^{\vee\vee}$ induced by $\varphi'$ is denoted
by $(\varphi')^{\vee\vee}$.

Now it is easy to see that the connection
\[
\nabla^{\mathcal V} \, :=\,
\nabla\otimes\text{Id}_{({\cE}')^{\vee\vee}} +
\text{Id}_{\cE}\otimes \nabla'
\]
induces a hermitian Yang--Mills--Higgs connection on
$({\mathcal V}\, ,\theta)$. Consequently, the Higgs sheaf
$({\mathcal V}\, ,\theta)$ is polystable (Corollary
\ref{YM-cor.}). This completes the proof of the lemma.
\end{proof}

\begin{proposition}\label{prop1.G}
Assume that $(\cE\, ,\varphi)$ and $({\cE}',\varphi')$
are both semistable. Then the Higgs
sheaf $({\mathcal V}\, ,\theta)$,
defined in \eqref{de.cV} and \eqref{de.th}, is also
semistable.
\end{proposition}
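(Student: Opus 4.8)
The plan is to reduce the semistable case to the polystable case already settled in Lemma~\ref{lem1.G} by means of Jordan--H\"older filtrations. Write $\mu(\cdot)\,=\,\deg(\cdot)/\rk(\cdot)$ for the slope. First I would note, using Lemma~\ref{tors}, that it suffices to argue with the reflexive representatives, and that
\[
\mu({\mathcal V})\,=\,\mu(\cE)+\mu({\cE}')\,,
\]
which follows from multiplicativity of the rank and additivity of $c_1$ under tensor product, computed over the large open subset $U\subset X$ on which $\cE$, ${\cE}'$ and ${\mathcal V}$ are simultaneously locally free. Since $\mathrm{codim}_X(X\setminus U)\geq 2$, all degrees, and hence all slopes, are determined by the restrictions to $U$.

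Next I would choose Jordan--H\"older filtrations of the two semistable Higgs sheaves,
\[
0=\cE_0\subset\cE_1\subset\cdots\subset\cE_p=\cE\,,\qquad
0={\cE}'_0\subset{\cE}'_1\subset\cdots\subset{\cE}'_q={\cE}'\,,
\]
by Higgs subsheaves, whose successive quotients $A_i:=\cE_i/\cE_{i-1}$ and $B_j:={\cE}'_j/{\cE}'_{j-1}$ are stable Higgs sheaves of slopes $\mu(\cE)$ and $\mu({\cE}')$ respectively. Such filtrations exist by the usual argument: a semistable Higgs sheaf contains a stable Higgs subsheaf of the same slope, and the corresponding quotient is again semistable of that slope, so the filtration is built by induction on the rank. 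Working over the locally free locus $U$, where the tensor product is exact, these two filtrations induce a filtration of ${\mathcal V}|_U=\cE|_U\otimes{\cE}'|_U$ by Higgs subsheaves whose associated graded pieces are the tensor products $A_i|_U\otimes B_j|_U$, each carrying the induced tensor-product Higgs field. That every step is preserved by $\theta=\varphi\otimes\mathrm{Id}+\mathrm{Id}\otimes\varphi'$ is the short computation $\theta(\cE_i\otimes{\cE}')\subset\Omega^1_X\otimes(\cE_i\otimes{\cE}')$, and likewise for the finer steps. By Lemma~\ref{lem1.G}, each $(A_i\otimes B_j)/\text{torsion}$ is polystable, hence semistable, of slope $\mu(\cE)+\mu({\cE}')=\mu({\mathcal V})$.

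It then remains to invoke the standard fact that a torsionfree Higgs sheaf carrying a filtration by Higgs subsheaves whose graded quotients are all semistable of one common slope $\mu$ is itself semistable of slope $\mu$. For any Higgs subsheaf ${\mathcal G}\subset{\mathcal V}$, intersecting with the filtration produces Higgs subsheaves ${\mathcal G}_i:={\mathcal G}\cap F_i$ with injections of the graded pieces into the corresponding $A_i\otimes B_j$, so that $\deg{\mathcal G}=\sum\deg({\mathcal G}_i/{\mathcal G}_{i-1})\leq\mu\cdot\sum\rk({\mathcal G}_i/{\mathcal G}_{i-1})=\mu\cdot\rk{\mathcal G}$, all quantities being read off over $U$. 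This yields $\mu({\mathcal G})\leq\mu({\mathcal V})$ and finishes the proof.

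The main obstacle is the bookkeeping with torsion and the non-exactness of $\otimes$: the clean route is to build the filtration entirely over $U$, where all the sheaves involved are locally free, and then to transport the resulting degree inequalities back to $X$ using the codimension-at-least-two condition on $X\setminus U$. Along the way one must check carefully that the intersections ${\mathcal G}\cap F_i$ are genuine Higgs subsheaves and that the injections on the graded pieces respect the induced Higgs fields, so that the semistability of each $(A_i\otimes B_j)/\text{torsion}$ can actually be applied.
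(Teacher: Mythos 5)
Your argument is correct and follows essentially the same route as the paper: filter both semistable Higgs sheaves so that the graded pieces have the common slope, apply Lemma~\ref{lem1.G} to see that each tensor product of graded pieces (modulo torsion) is polystable of slope $\mu(\cE)+\mu({\cE}')$, and conclude semistability of $({\mathcal V},\theta)$ from the induced filtration over the locally free locus. The only cosmetic difference is that the paper filters by polystable (rather than stable) quotients and leaves the final ``filtration with semistable graded pieces of one slope implies semistable'' step implicit, which you spell out.
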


\begin{proof}
Since $(\cE\, ,\varphi)$ is semistable, there is filtration
of coherent subsheaves
\begin{equation}\label{G.filt}
0\, =\, E_0\, \subsetneq\, E_1\, \subsetneq\, \cdots\,
\subsetneq\, E_{\ell-1} \, \subsetneq\, E_\ell \, =\, \cE
\end{equation}
such that the following hold:
\begin{itemize}
\item $E_i/E_{i-1}$ is torsionfree with
\[
\frac{\deg (E_i/E_{i-1})}{\rk (E_i/E_{i-1})} \,
=\, \frac{\deg \cE}{\rk \cE}
\]
for all $i\in [1\, ,\ell]$,

\item $\varphi (E_i) \, \subset\, \Omega^1_X\otimes E_i$
for all $i\in [0\, ,\ell]$, and

\item for all $i\in [1\, ,\ell]$, the quotient $E_i/E_{i-1}$
equipped with the Higgs field induced by $\varphi$ is
polystable.
\end{itemize}

Let
\[
0\, =\, E'_0\, \subsetneq\, E'_1\, \subsetneq\, \cdots\,
\subsetneq\, E'_{\ell'-1} \, \subsetneq\, E'_{\ell'} \, =\, {\cE}'
\]
be the filtration constructed as in \eqref{G.filt} for the
semistable Higgs sheaf $({\cE}',\varphi')$.
Therefore, the Higgs fields $\varphi$ and $\varphi'$
together induce a Higgs field on
\[
{\mathcal V}_{i,j} \, :=\, ((E_i/E_{i-1})\otimes
(E'_j/E'_{j-1}))/\text{torsion}
\]
for all $i\in [1\, ,\ell]$
and $j\in [1\, ,\ell']$. This Higgs field on
${\mathcal V}_{i,j}$ will be denoted by $\theta_{i,j}$. From
Lemma \ref{lem1.G} it follows that the Higgs sheaf
$({\mathcal V}_{i,j}\, ,\theta_{i,j})$ is polystable for all
$i,j$. Since $\deg (E_i/E_{i-1})/\rk (E_i/E_{i-1})
= \deg \cE/\rk \cE$ and $\deg (E'_j/E'_{j-1})/\rk
(E'_j/E'_{j-1}) = \deg {\cE}'/\rk {\cE}'$, we conclude that
\begin{equation}\label{eq3a}
\frac{\deg {\mathcal V}_{i,j}}{\rk {\mathcal V}_{i,j}} \,
=\, \frac{\deg {\mathcal V}}{\rk {\mathcal V}}\, .
\end{equation}

Let $U\, \subset\, X$ be the dense open subset over which
all $E_i/E_{i-1}$, $i\in [1\, ,\ell]$, and all
$E'_j/E'_{j-1}$, $j\in [1\, ,\ell']$, are locally free. The
complement $X\setminus U$ is a
complex analytic subset of complex codimension
at least two (recall that all $E_i/E_{i-1}$ and
$E'_j/E'_{j-1}$ are torsionfree).

We note that over $U$, the Higgs sheaf
$({\mathcal V}\, ,\theta)$ admits a filtration such that each
successive quotient is a Higgs sheaf of the form
$({\mathcal V}_{i,j}\, ,\theta_{i,j})$ for some $i,j$. We already
noted that each $({\mathcal V}_{i,j}\, ,\theta_{i,j})$ is polystable
satisfying \eqref{eq3a}. Consequently, the Higgs sheaf
$({\mathcal V}\, ,\theta)$ is semistable. This completes the
proof of the proposition.
\end{proof}

\begin{theorem}\label{thm1}
A principal Higgs $G$--sheaf $(E_G\, , E \, ,\psi\,,
\varphi\, ,\widehat{\varphi})$ over a
compact connected K\"ahler manifold
$X$ admits an admissible hermitian Yang--Mills--Higgs connection
if and only if $(E_G\, , E \, ,\psi\,,
\varphi\, ,\widehat{\varphi})$ is polystable. Furthermore,
a polystable principal Higgs $G$--sheaf admits a unique
hermitian Yang--Mills--Higgs connection.
\end{theorem}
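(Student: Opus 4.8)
The plan is to reduce the principal $G$–sheaf statement to the already-established Higgs sheaf case (Theorem~\ref{yang-mills-higgs} and Corollary~\ref{YM-cor.}) via the standard Ramanathan-type correspondence between reductions of structure group and destabilizing subsheaves, combined with the fact that polystability is preserved under tensor operations (Lemma~\ref{lem1.G} and Proposition~\ref{prop1.G}). The key observation is that a hermitian Yang--Mills--Higgs connection on $E_G$ over $U$ is, by the very definition given above, a Chern connection whose induced connection on $\mathrm{ad}(E_G)$ (equivalently on $E_G(\mathfrak g')$, identified with $E^{\vee\vee}$ via $\psi$) is an admissible hermitian Yang--Mills--Higgs connection on the reflexive Higgs sheaf $(E^{\vee\vee},\widehat\varphi)$, together with the normalization condition on the central part $(E_Z,\varphi^z)$.

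For the \emph{existence} direction, assume $(E_G,E,\psi,\varphi,\widehat\varphi)$ is polystable. Then the associated Higgs sheaf $(E_G(\mathfrak g'),\varphi)\cong(E^{\vee\vee},\widehat\varphi)$ is polystable as a Higgs sheaf: a destabilizing Higgs subsheaf of the adjoint sheaf would, by the correspondence with parabolic reductions, violate the inequality \eqref{0}, and in the strictly semistable case the Levi reduction $E_{L(P)}$ provides the polystable decomposition. Corollary~\ref{YM-cor.} then yields a unique admissible hermitian Yang--Mills--Higgs connection on $(E^{\vee\vee},\widehat\varphi)$. The remaining task is to \emph{lift} this connection on the adjoint sheaf to a Chern connection on the $G$–bundle $E_G$ itself. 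Here one uses that $\mathfrak g=\mathfrak g'\oplus\z(\mathfrak g)$ as a $G$–module, and that the central factor contributes the principal Higgs $Z$–bundle $(E_Z,\varphi^z)$, which extends to $X$ and carries a unique hermitian Yang--Mills--Higgs connection as established in the discussion preceding the theorem. Combining the Yang--Mills--Higgs connection on $E_G(\mathfrak g')$ with the canonical one on $E_Z$ determines a reduction of structure group to $K(G)$ over $U$ and hence the desired Chern connection; one must check this reduction is compatible with the adjoint-level connection, which reduces to the standard fact that a metric on $\mathrm{ad}(E_G)$ satisfying the Einstein condition, together with the central normalization, pins down a $K(G)$–reduction because $G$ is reductive.

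For the \emph{converse}, suppose a hermitian Yang--Mills--Higgs connection exists. By definition its induced connection on $(E^{\vee\vee},\widehat\varphi)$ is an admissible hermitian Yang--Mills--Higgs connection, so by Corollary~\ref{YM-cor.} the Higgs sheaf $(E^{\vee\vee},\widehat\varphi)$ is polystable. One then runs the Ramanathan argument in reverse: for any parabolic reduction $E_Q$ over a large open subset $U'$ with $\varphi|_{U'}$ a section of $\Omega^1_X\otimes\mathrm{ad}(E_Q)$, the polystability of the adjoint Higgs sheaf forces $\deg(\mathrm{ad}(E_G)/\mathrm{ad}(E_Q))\ge 0$, giving semistability; and when equality can occur the polystable decomposition of $(E^{\vee\vee},\widehat\varphi)$ produces the required Levi reduction $E_{L(P)}$ with the degree-zero character condition, yielding polystability in the sense of the definition. \emph{Uniqueness} follows because the connection on the adjoint sheaf is unique by Corollary~\ref{YM-cor.}, the connection on $E_Z$ is unique by construction, and these two together determine the $K(G)$–reduction, hence the Chern connection on $E_G$, uniquely.

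\emph{The main obstacle} I expect is the faithful translation between the sheaf-theoretic polystability of the definition (phrased via parabolic and Levi reductions and the degree inequality \eqref{0}) and the Higgs-sheaf polystability of the adjoint sheaf $(E_G(\mathfrak g'),\varphi)$ that Corollary~\ref{YM-cor.} requires. In the vector-bundle setting this is Ramanathan's lemma relating maximal parabolic reductions to maximal destabilizing subsheaves of the adjoint bundle; over a large open subset with the sheaf only reflexive off codimension two, one must verify that taking double duals and restricting to the locally free locus does not disturb degrees, and that the Levi reduction genuinely reconstructs the orthogonal polystable splitting. The compatibility of the central $Z$–normalization with the semisimple part—ensuring that the two separately-determined connections glue to a single Chern connection on $E_G$ rather than merely on $\mathrm{ad}(E_G)$—is the second delicate point, resting on reductivity of $G$ and the direct sum decomposition $\g=\g'\oplus\z(\g)$ of $G$–modules.
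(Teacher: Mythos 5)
Your proposal follows essentially the same route as the paper: both reduce to the adjoint Higgs sheaf (identified via $\psi$ with $E^{\vee\vee}$ plus the trivial central summand), invoke Corollary~\ref{YM-cor.} there, rely on the tensor-product polystability results (Lemma~\ref{lem1.G}, Proposition~\ref{prop1.G}) to make the Anchouche--Biswas argument go through, and treat the center via the $Z$--bundle. The paper is just as terse about the two delicate points you flag --- the Ramanathan-type translation between parabolic/Levi reductions and polystability of the adjoint Higgs sheaf, and the lifting of the adjoint-level connection to a $K(G)$--reduction of $E_G$ --- deferring both to \cite{AB}.
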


\begin{proof}
Since we have proved Proposition \ref{prop1.G}, the proof of
Theorem 4.10 in \cite[p. 227]{AB} goes through to give a
proof of the above theorem. See the final paragraph in
\cite[p. 227]{AB} explaining the issue. We give some details
of the arguments.

Let $U$ be the large open subset of $X$ over which $E_G$ is
holomorphic principal $G$--bundle (see Remark \ref{rem1}). Consider
the reflexive sheaf $\iota_* \text{ad}(E_G\vert_U)$ over $X$, where
$\iota\, :\, U\, \hookrightarrow\, X$ is the inclusion map.
We note that using $\psi$, the direct image
$\iota_* \text{ad}(E_G\vert_U)$ is identified with the direct
sum $E^{\vee\vee}\oplus (X\times \z({\mathfrak g}))$, where
$X\times \z({\mathfrak g})$ is the trivial holomorphic
vector bundle over $X$ with fiber $\z({\mathfrak g})$. For
notational convenience, the sheaf $\iota_* \text{ad}(E_G\vert_U)$
will be denoted by $\text{ad}(E_G)$. The Higgs field $\varphi$
clearly defines a Higgs field on the reflexive sheaf
$\text{ad}(E_G)$; this induced Higgs field on $\text{ad}(E_G)$ will
be denoted by $\varphi'$.

If $\nabla$ is an admissible hermitian Yang--Mills--Higgs connection
on $(E_G\, , E \, ,\psi\,, \varphi\, ,\widehat{\varphi})$, then it
is straight--forward to check that the connection on $\text{ad}(E_G)$
induced by $\nabla$ is an admissible hermitian Yang--Mills--Higgs
connection for the Higgs sheaf $(\text{ad}(E_G)\, , \varphi')$.
Now from Corollary \ref{YM-cor.} it follows that
$(\text{ad}(E_G)\, , \varphi')$ is polystable. From this it is
easy to deduce that $(E_G\, , E \, ,\psi\,, \varphi\, ,
\widehat{\varphi})$ is polystable.

To prove the converse,
assume that $(E_G\, , E \, ,\psi\,, \varphi\, ,
\widehat{\varphi})$ is polystable. Using this assumption
it can be shown that the above defined
Higgs sheaf $(\text{ad}(E_G)\, , \varphi')$
is polystable; the details are in \cite{AB}. Let
$\nabla'$ be the admissible hermitian Yang--Mills--Higgs
connection for the Higgs sheaf $(\text{ad}(E_G)\, , \varphi')$
given by Corollary \ref{YM-cor.}. This connection
$\nabla'$ on $\text{ad}(E_G)$ induces a connection on
$E_G\vert_U$, where $U\, \subset\, X$ is the open subset over
which $E_G$ is a holomorphic principal $G$--bundle. It can
be shown that this induced connection on $E_G\vert_U$ is
an admissible hermitian Yang--Mills--Higgs connection
on $(E_G\, , E \, ,\psi\,, \varphi\, ,\widehat{\varphi})$.
\end{proof}

We have the following analog of the Bogomolov inequality.

Let $(E_G\, , E \, ,\psi\,, \varphi\, ,\widehat{\varphi})$ be a
polystable principal Higgs $G$--sheaf over a compact connected
K\"ahler manifold $X$ equipped with the K\"ahler form $\omega$. Let
$U$ be the large open subset of $X$ over which $E_G$ is holomorphic
principal $G$--bundle (see Remark \ref{rem1}). Consider the reflexive
sheaf ${\rm ad}(E_G)\, :=\, \iota_* {\rm ad}(E_G\vert_U)$ over $X$,
where $\iota\, :\, U\, \hookrightarrow\, X$ is the inclusion map.
Let $\varphi'$ be the Higgs field on ${\rm ad}(E_G)$ induced by
$\varphi$. Let $\nabla'$ be the (singular) connection on ${\rm
ad}(E_G)$ induced by the admissible hermitian Yang--Mills--Higgs
connection on $(E_G\, , E \, ,\psi\,, \varphi\,
,\widehat{\varphi})$. (It was noted in the proof of Theorem
\ref{thm1} that $\nabla'$ coincides with the admissible hermitian
Yang--Mills--Higgs connection on the polystable Higgs sheaf $({\rm
ad}(E_G) \, , \varphi')$ given by Corollary \ref{YM-cor.}.)

\begin{proposition}\label{pr.bo.}
With the above notation,
$$
(2\dim_{\mathbb C}{\mathfrak g}{\cdot}(c_2({\rm ad}(E_G)))-
(\dim_{\mathbb C} {\mathfrak g} -1)c_1(
{\rm ad}(E_G))^2)\omega^{d-2}\, \geq\, 0\, ,
$$
where ${\mathfrak g}$ is the Lie algebra of $G$. Furthermore,
the equality holds if and only if $U\, =\, X$ and the
connection $\nabla'+ \varphi'+  (\varphi')^*$ on
${\rm ad}(E_G)$ is projectively flat.
\end{proposition}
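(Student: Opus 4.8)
The plan is to prove the inequality by the Chern--Weil method, using the Bogomolov--L\"ubke pointwise identity in its Higgs form. Write $r\,=\,\dim_{\mathbb C}\g$ for the rank of the reflexive sheaf $\mathrm{ad}(E_G)$ and $n\,=\,\dim_{\mathbb C}X$ (so that $d=n$). On the locally free locus $U=X\setminus S$ the connection $\nabla'$ and the Higgs field $\varphi'$ give rise to the augmented curvature $\wt F\,=\,F'+\varphi'\wedge(\varphi')^*+(\varphi')^*\wedge\varphi'$, where $F'$ is the curvature of $\nabla'$; this is the curvature of the combined connection $\nabla'+\varphi'+(\varphi')^*$ appearing in the statement. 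By construction (see the proof of Theorem~\ref{thm1}), $\nabla'$ is the admissible hermitian Yang--Mills--Higgs connection on $(\mathrm{ad}(E_G),\varphi')$ furnished by Corollary~\ref{YM-cor.}, so the Yang--Mills--Higgs equation $\Lambda_\omega\wt F=\lambda\cdot\mathrm{Id}$ holds on $U$; in particular $\Lambda_\omega\wt F$ is a scalar endomorphism.

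First I would record the Chern--Weil representatives. Since $\tr(\varphi'\wedge(\varphi')^*+(\varphi')^*\wedge\varphi')=0$, the form $\frac{\ii}{2\pi}\tr\wt F=\frac{\ii}{2\pi}\tr F'$ represents $c_1(\mathrm{ad}(E_G))$. By Chern--Weil theory applied to the connection $\nabla'+\varphi'+(\varphi')^*$, whose curvature is exactly $\wt F$ (its $(2,0)$ and $(0,2)$ parts vanish because $\varphi'$ is holomorphic and $\varphi'\wedge\varphi'=0$), the second Chern form built from $\wt F$ represents $c_2(\mathrm{ad}(E_G))$. Decomposing $\wt F=\wt F_0+\frac1r(\tr\wt F)\,\mathrm{Id}$ into its trace--free and scalar parts, the Bogomolov--L\"ubke pointwise identity gives, for a universal constant $C>0$ and at every point of $U$,
\begin{equation*}
\bigl(2r\,c_2(\mathrm{ad}(E_G))-(r-1)\,c_1(\mathrm{ad}(E_G))^2\bigr)\wedge\omega^{n-2}
\,=\,C\bigl(|\wt F_0|^2-|\Lambda_\omega\wt F_0|^2\bigr)\,\omega^{n}.
\end{equation*}
Because $\Lambda_\omega\wt F$ is scalar, its trace--free part $\Lambda_\omega\wt F_0$ vanishes, so the right--hand side reduces to $C\,|\wt F_0|^2\,\omega^n\geq 0$ pointwise on $U$.

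The main obstacle is to pass from this pointwise inequality on $U$ to the asserted inequality between the \emph{topological} characteristic numbers of the reflexive sheaf $\mathrm{ad}(E_G)$, i.e.\ to justify that integrating the Chern--Weil forms over $X\setminus S$ computes $c_2(\mathrm{ad}(E_G))\,\omega^{n-2}$ and $c_1(\mathrm{ad}(E_G))^2\,\omega^{n-2}$. This is precisely where admissibility is used. I would pull everything back under the flattening map $\pi\colon\wt X\to X$ and work with the forms $\omega_\epsilon=\pi^*\omega+\epsilon\eta$, exactly as in the admissibility argument preceding Theorem~\ref{yang-mills-higgs}: there $\wt F$ is square integrable and $\tr(\wt F\wedge\wt F)\wedge\omega_\epsilon^{n-2}$ is integrable with bounds uniform in $\epsilon$, and letting $\epsilon\to0$ identifies the Chern--Weil integrals with the genuine intersection numbers. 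For the $c_1$ contribution this is the degree Lemma; for the $c_2$ contribution it is the $L^2$ Chern--Weil theory of Bando--Siu \cite{BS}. Integrating the pointwise identity over $X\setminus S$ then gives
\begin{equation*}
\bigl(2r\,c_2(\mathrm{ad}(E_G))-(r-1)\,c_1(\mathrm{ad}(E_G))^2\bigr)\omega^{n-2}
\,=\,C\int_{X\setminus S}|\wt F_0|^2\,\omega^n\,\geq\,0 ,
\end{equation*}
which is the desired inequality.

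Finally, for the equality case I would argue that equality holds exactly when $\int_{X\setminus S}|\wt F_0|^2\,\omega^n=0$, i.e.\ when $\wt F_0\equiv0$ on $U$; by regularity of the metric on $U$ this says precisely that the augmented curvature is a scalar multiple of the identity, that is, the connection $\nabla'+\varphi'+(\varphi')^*$ is projectively flat on $U$. It then remains to see that projective flatness forces $U=X$: a projectively flat admissible metric with square integrable curvature on the complement of the codimension $\geq2$ set $S$ extends across $S$ (the local projective structure extends by the same codimension argument used, for harmonic functions, in the degree Lemma, cf.\ \cite{BS}), so $\mathrm{ad}(E_G)$ is locally free on all of $X$; since $U$ is by definition the locus of local freeness, this gives $U=X$. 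Conversely, if $U=X$ and $\nabla'+\varphi'+(\varphi')^*$ is projectively flat, then $\wt F_0\equiv0$ and the integral vanishes, so equality holds. I expect the identification of the Chern--Weil integrals with the topological numbers, together with the extension in the equality case, to be the only genuinely delicate points; the rest is the classical L\"ubke computation carried over verbatim to the augmented curvature.
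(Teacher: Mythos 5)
The paper itself disposes of this proposition in two lines, by citing the proof of Proposition 3.4 of Simpson and Corollary 3 of Bando--Siu; your plan is essentially an unwinding of those references (L\"ubke-type pointwise identity plus the admissible/$L^2$ Chern--Weil theory via the flattening $\pi\colon\wt X\to X$), so the strategy is the right one. However, there is a concrete error at the heart of your pointwise identity. You assert that the curvature of $D:=\nabla'+\varphi'+(\varphi')^*$ is exactly the augmented $(1,1)$-curvature $\wt F=F'+[\varphi',(\varphi')^*]$ because ``its $(2,0)$ and $(0,2)$ parts vanish since $\varphi'$ is holomorphic and $\varphi'\wedge\varphi'=0$.'' This is false: holomorphicity gives $\db\varphi'=0$, which kills a $(1,1)$-contribution, but the $(2,0)$-part of $D^2$ is $\partial_h\varphi'$ (the $(1,0)$-covariant derivative of the Higgs field in the Chern connection), and this does not vanish for a general Yang--Mills--Higgs metric; dually the $(0,2)$-part is $\db(\varphi')^*=\pm(\partial_h\varphi')^*$. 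Consequently the Chern--Weil representative of $2r\,c_2-(r-1)c_1^2$ computed from $D$ is $\frac{r}{4\pi^2}\tr\bigl((D^2)^{\perp}\wedge (D^2)^{\perp}\bigr)$ with $D^2=\wt F+\partial_h\varphi'+\db(\varphi')^*$, and after wedging with $\omega^{n-2}$ the cross term $2\tr\bigl((\partial_h\varphi')^{\perp}\wedge(\db(\varphi')^*)^{\perp}\bigr)\wedge\omega^{n-2}$ survives. The correct identity therefore reads
\begin{equation*}
\bigl(2r\,c_2-(r-1)c_1^2\bigr)\wedge\omega^{n-2}
= C\Bigl(|\wt F^{\perp}|^2-|\Lambda_\omega\wt F^{\perp}|^2
+2\,|(\partial_h\varphi')^{\perp}|^2\Bigr)\omega^{n},
\end{equation*}
not the one you wrote. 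You are saved by the sign: the extra term is non-negative, so the inequality still follows once $\Lambda_\omega\wt F$ is scalar. (Alternatively, had you computed $c_2$ from the Chern connection $\nabla'$ alone, you would face the opposite problem: $\Lambda_\omega F'{}^{\perp}=-\Lambda_\omega[\varphi',(\varphi')^*]^{\perp}$ is not zero, and positivity is not immediate. Either way the missing term must be confronted.)

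The same omission infects your equality discussion. Vanishing of $\wt F^{\perp}$ alone does \emph{not} ``say precisely'' that $D=\nabla'+\varphi'+(\varphi')^*$ is projectively flat: projective flatness means the full curvature $D^2$ is central, which also requires $(\partial_h\varphi')^{\perp}=0$. It is exactly the corrected identity that delivers this extra vanishing at equality, so the characterization in the proposition comes out right only after the repair. The remaining ingredients of your argument --- the passage from $U=X\setminus S$ to the global intersection numbers via $\omega_\epsilon=\pi^*\omega+\epsilon\eta$ and the admissibility estimates, and the removable-singularity step showing that equality forces $U=X$ --- are sketched rather than proved, but they are precisely the content of the cited results of Bando--Siu (their Corollary 3 is the statement that equality forces local freeness and projective flatness), so I regard those as acceptable appeals to the literature rather than gaps.
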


\begin{proof}
Since the connection $\nabla'$ is the admissible hermitian
Yang--Mills--Higgs connection on the polystable Higgs sheaf
$({\rm ad}(E_G) \, , \varphi')$, this proposition follows from
the proof of Proposition 3.4 in \cite[p. 878]{Si}. See
also Corollary 3 in \cite[p. 40]{BS}.
\end{proof}


\end{document}